\DeclareMathAlphabet{\mathpzc}{OT1}{pzc}{m}{it}
\theoremstyle{plain}
\newtheorem{theorem}{Theorem}[section]
\newtheorem{lemma}[theorem]{Lemma}
\newtheorem{proposition}[theorem]{Proposition}
\theoremstyle{definition}
\newtheorem{remark}[theorem]{Remark}
\newtheorem{definition}[theorem]{Definition}
\newtheorem{assumption}{Assumption}
\numberwithin{equation}{section}
\numberwithin{equation}{section}
\def\Ddots{\mathinner{\mkern1mu\raise\p@
\vbox{\kern7\p@\hbox{.}}\mkern2mu
\raise4\p@\hbox{.}\mkern2mu\raise7\p@\hbox{.}\mkern1mu}}
\newcommand{\de}[1]{\partial_{#1}}
\def\F{\ensuremath{\mathcal{F}}}
\def\R{\ensuremath{\mathbb{R}}}
\def\C{\ensuremath{\mathbb{C}}}
\def\N{\mathbb{N}}
\def\B{\mathcal{B}}
\def\X{\mathcal{X}}
\def\H{\mathcal{H}}
\def\G{\mathcal{G}}
\def\Y{\mathcal{Y}}
\def\Pc{\mathcal{P}}
\newcommand{\var}{\mathbb{V}\text{ar}}
\def\P{\mathbb{P}}
\def\Q{\mathbb{Q}}
\def\E{\ensuremath{\mathrm{I\kern-.2em E}}}
\def\hP{\ensuremath{\widehat{\mathrm{I\kern-.2em P}}}}
\def\hE{\ensuremath{\widehat{\mathrm{I\kern-.2em E}}}}
\def\bP{\ensuremath{\overline{\mathrm{I\kern-.2em P}}}}
\def\bE{\ensuremath{\overline{\mathrm{I\kern-.2em E}}}}
\def\y1{y_{-1}}
\def\e{\mathrm{e}}
\newcommand*{\LargerCdot}{\raisebox{-0.25ex}{\scalebox{1.2}{$\cdot$}}}
\newcommand{\q}[1]{\hspace*{-.7mm}\left\langle #1 \right\rangle}
\newcommand{\p}[1]{\overset{\LargerCdot}{#1}}
\newcommand{\px}[1]{\p{x_{#1}}^{\underset{\hspace{-.1cm}ac}{}}}
\begin{document}

\begin{center}
{\Large \bf Long-time trajectorial large deviations for affine stochastic volatility models and application to variance reduction for option pricing}
\vspace*{.5cm}
\begin{multicols}{3}
Zorana {\sc Grbac}\footnote{\label{fn:Paris7}Universit\'e Paris Diderot, Paris, France} \\
David {\sc Krief} \footref{fn:Paris7}\\
Peter {\sc Tankov} \footref{fn:Paris7}\hspace{.03cm}\footnote{ENSAE, Palaiseau, France}
\end{multicols} 
\vspace*{.5cm}
\end{center}
{\justifying {\bf Abstract:} {\it This work extends the variance reduction method for the pricing of possibly path-dependent derivatives, which was developed in \cite{Ge:Ta} for exponential L\'evy models, to affine stochastic volatility models \cite{Kel2011}. We begin by proving a pathwise large deviations principle for affine stochastic volatility models. We then apply a time-dependent Esscher transform to the affine process and use Varadhan's Lemma, in the fashion of \cite{Gu:Ro2008} and \cite{Rob2010}, to approximate the problem of finding the Esscher measure that minimises the variance of the Monte-Carlo estimator. We test the method on the Heston model with and without jumps to demonstrate the numerical efficiency of the method.}}

\begin{center}
\end{center}
\setlength{\parindent}{0in}

\section{Introduction}
The aim of this paper is to develop efficient importance sampling
estimators for prices of path-dependent options in affine
stochastic volatility (ASV) models of asset prices. To this end, we
establish pathwise large deviation results for these models, which are
of independent interest. 

An ASV model, studied in \cite{Kel2011} is a
two-dimensional affine process $(X,V)$ on $\mathbb R\times
\mathbb R_+$ with special properties, where $X$ models the logarithm
of the stock price and $V$ its instantaneous variance.  This class includes many well studied and widely used models such as Heston
stochastic volatility model \cite{Hes1993}, the model of Bates \cite{Bat1996},
Barndorff-Nielsen stochastic volatility model \cite{Ba:Sh2001} and time-changed
L\'evy models with independent affine time change. European options in affine
stochastic volatility models may be priced by Fourier transform, but
for path-dependent options explicit formulas are in general not
available and Monte Carlo is often the method of choice. At the same
time, Monte Carlo simulation of such processes is difficult and
time-consuming: the convergence rates of
discretization schemes are often low due to the irregular nature of
coefficients of the corresponding stochastic differential
equations. To accelerate Monte Carlo simulation, it is thus
important to develop efficient variance-reduction algorithms for these
models. 

In this paper, we therefore develop an importance sampling algorithm
for ASV models. 
The importance sampling method is based on the following identity, valid for any probability measure $\mathbb Q$, with respect to which $\mathbb P$ is absolutely continuous. Let $P$ be a deterministic function of a random trajectory $S$, then
$$
\mathbb E[P(S)] = \mathbb E^{\mathbb Q}\left[\frac{d\P}{d\Q}P(S)\right].
$$
This allows one to define the {importance sampling estimator}
$$
\widehat P^{\mathbb Q}_N := \frac{1}{N} \sum_{j=1}^N \left[\frac{d\mathbb
  P}{d\mathbb Q}\right]^{(j)} P(S^{(j)}_{\mathbb Q}),
$$
where $S^{(j)}_{\mathbb Q}$ are i.i.d.~sample trajectories of $S$ under the
measure $\mathbb Q$. For efficient variance reduction, one needs then to find a probability measure $\mathbb Q$
such that $S$ is {easy to simulate} under $\mathbb Q$ and the variance 
$$
\var_{\mathbb Q} \left[ P(S)\frac{d\mathbb P}{d\mathbb Q} \right] 
$$ 
is considerably smaller than the original variance $\var_{\mathbb P} \left[P(S) \right]$. 
 
In this paper, following the work of \cite{Ge:Ta} in the context of
L\'evy processes, we define the probability $\mathbb Q$ using the path-dependent Esscher transform,
$$
\frac{d\P_\theta}{d\mathbb P} = \frac{e^{\int_{[0,T]} X_t
    \cdot \theta(dt)}}{\mathbb E\left[e^{\int_{[0,T]} X_t
    \cdot \theta(dt)}\right]},
$$
where $X$ is the first component of the ASV
model (the logarithm of stock price) and $\theta$ is a (deterministic) bounded 
signed measure on $[0,T]$. The optimal choice of $\theta$ should minimize the variance of the estimator under $\mathbb P_\theta$,
$$
\var_{\mathbb P_\theta} \left( P(S) \frac{d\mathbb P}{d\P_\theta}\right) = \mathbb E_{\mathbb P}\left[P^2(S)\frac{d\mathbb P}{d\P_\theta}\right] - \mathbb E\left[P(S)\right]^2.
$$

The computation of this variance is in general as difficult as the
computation of the option price itself. Following
\cite{Du:Wa2004,Gl:He:Sh1999,Gu:Ro2008,Rob2010} and more recently \cite{Ge:Ta}, we propose to compute the variance
reduction measure $\theta^*$ by minimizing the \emph{proxy} for the
variance computed using the theory of large deviations. 

To this end, we establish a pathwise large deviation principle (LDP) for affine stochastic volatility models. A one dimensional LDP for $X_t/t$ as $t\to \infty$
where $X$ is the first component of an ASV model has been proven in \cite{Ja:Ke:Mi2013}. In this paper, we extend this result to the trajectorial setting, in the spirit of the pathwise LDP principles of \cite{Leo2000}, but in a weaker topology. 

The rest of the paper is structured as follows. In Section \ref{sec:Model Description}, we describe the model and recall certain useful properties of ASV processes. In Section \ref{sec:LargeDeviationsTheory}, we recall some general results of large deviations theory. In Section \ref{sec:LDP-Affine}, we prove a LDP for the trajectories of ASV processes. In Section \ref{sec:VarianceReduction}, we develop the variance reduction method, using an asymptotically optimal change of measure obtained via the LDP shown in Section \ref{sec:LDP-Affine}. In Section \ref{sec:NumericalExamples}, we test the method numerically on several examples of options, some of which are path-dependent, in the Heston model with and without jumps.

\section{Model description} \label{sec:Model Description}
In this paper, we model the price of the underlying asset $(S_t)_{t \ge 0}$ of an option as $S_t = S_0 \, e^{X_t}$, where we model $(X_t)_{t \ge 0}$ as an affine stochastic volatility process. We recall, from \cite{Kel2011} and \cite{Du:Fi:Sc2003}, the definition and some properties of ASV models.
\begin{definition}
An ASV model $(X_t,V_t)_{t \ge 0}$, is a stochastically continuous, time-homogeneous Markov process such that $\left(e^{X_t}\right)_{t \ge 0}$ is a martingale and
\begin{equation}\label{eq:LaplaceTransform}
\E\left( e^{u X_{t} + w V_{t}}\middle|X_0=x,V_0=v\right) = e^{\phi(t,u,w) + \psi(t,u,w) \, v + u \, x}\:,
\end{equation}
for all $(t,u,w) \in \R_+ \!\!\times \C^2$.
\end{definition}
\begin{proposition}
The functions $\phi$ and $\psi$ satisfy generalized Riccati equations
\begin{subequations} \label{eq:GeneralizedRiccati}
\begin{align}
\de{t} \phi(t,u,w) & = F(u,\psi(t,u,w))  \:, && \phi(0,u,w) = 0 \label{eq:GeneralizedRiccati-a}\\
\de{t} \psi(t,u,w) & = R(u,\psi(t,u,w))  \:, && \psi(0,u,w) = w \label{eq:GeneralizedRiccati-b}\:,
\end{align}
\end{subequations}
where $F$ and $R$ have the L\'evy-Khintchine forms
\begin{align*}
F(u,w) & = 
\left(\begin{array}{cc}
u & w 
\end{array}\right) 
\cdot \frac{a}{2} \cdot 
\left(\begin{array}{c}
u \\
w
\end{array}\right) 
+ b \cdot
\left(\begin{array}{c}
u \\
w
\end{array}\right) \\
& \qquad + \int_{D\backslash\{0\}}\left(e^{xu+yw}-1-w_F(x,y) \cdot \left(\begin{array}{c}
u \\
w
\end{array}\right)\right) m(dx,dy) \:, \\
R(u,w) & = 
\left(\begin{array}{cc}
u & w 
\end{array}\right) 
\cdot \frac{\alpha}{2} \cdot 
\left(\begin{array}{c}
u \\
w
\end{array}\right) 
+ \beta \cdot
\left(\begin{array}{c}
u \\
w
\end{array}\right) \\
& \qquad + \int_{D\backslash\{0\}}\left(e^{xu+yw}-1-w_R(x,y) \cdot \left(\begin{array}{c}
u \\
w
\end{array}\right)\right) \mu(dx,dy) \:,
\end{align*}
where $D = \R \times \R_+$, 
\[
w_F(x,y) = 
\left(\begin{array}{c}
\frac{x}{1+x^2} \\
0
\end{array}\right) 
\qquad \text{and} \qquad
w_R(x,y) = 
\left(\begin{array}{c}
\frac{x}{1+x^2} \\
\frac{y}{1+y^2} 
\end{array}\right)
\]
and $(a,\alpha, b, \beta, m, \mu)$ satisfy the following conditions
\begin{itemize}
\item $a,\alpha$ are positive semi-definite 2$\times$2-matrices where $a_{12}=a_{21}=a_{22}=0$.
\item $b \in D$ and $\beta \in \R^2$.
\item $m$ and $\mu$ are L\'evy measures on $D$ and $\int_{D\backslash \{0\}} ((x^2+y) \wedge 1) \, m(dx,dy) < \infty$.
\end{itemize}
\end{proposition}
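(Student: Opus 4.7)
I would follow the standard three-step programme for characterising affine processes via generalised Riccati equations, originating in \cite{Du:Fi:Sc2003}.

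\emph{Step 1: Semi-flow identity.} Using the time-homogeneous Markov property together with \eqref{eq:LaplaceTransform}, conditioning at an intermediate time $s$ gives, for all $s,t\geq 0$ and admissible $(u,w)\in\C^2$,
\begin{align*}
\phi(t+s,u,w) &= \phi(s,u,w) + \phi\bigl(t, u, \psi(s,u,w)\bigr),\\
\psi(t+s,u,w) &= \psi\bigl(t, u, \psi(s,u,w)\bigr).
\end{align*}
This is the Chapman--Kolmogorov equation rewritten in terms of the affine exponents, and it alone encapsulates enough structure to reduce the problem to an ODE.

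\emph{Step 2: Differentiation at zero.} Stochastic continuity of $(X,V)$, combined with the affine-exponential form, implies that $t\mapsto\phi(t,u,w)$ and $t\mapsto\psi(t,u,w)$ are right-differentiable at $t=0$. I would then define
\[
F(u,w) := \de{t}\phi(t,u,w)\big|_{t=0^+}, \qquad R(u,w) := \de{t}\psi(t,u,w)\big|_{t=0^+},
\]
and differentiate the semi-flow identities in $s$ at $s=0$ using $\phi(0,u,w)=0$ and $\psi(0,u,w)=w$. This yields \eqref{eq:GeneralizedRiccati-a} and \eqref{eq:GeneralizedRiccati-b} with the stated initial conditions.

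\emph{Step 3: L\'evy--Khintchine form and admissibility.} Every transition law is infinitely divisible, as seen by iterating the semi-flow on the partition $t = n\cdot(t/n)$; consequently $F$ and $R$ are limiting rescaled log-characteristic functions and must have L\'evy--Khintchine structure. The splitting of the generator into a component independent of $v$ (giving $F$, with parameters $a,b,m$ and truncation $w_F$) and a component proportional to $v$ (giving $R$, with parameters $\alpha,\beta,\mu$ and truncation $w_R$) is obtained by matching the coefficients of $1$ and $v$ in the Kolmogorov backward equation evaluated on the exponential family $e^{ux+wv}$. The admissibility conditions are then forced by the invariance of $D=\R\times\R_+$: the constraints $a_{12}=a_{21}=a_{22}=0$ and $b\in D$ prevent the state-independent diffusion and drift from pushing $V$ below zero, and the support and integrability conditions on $m,\mu$ ensure that the jump compensators are well defined and the jumps preserve $D$.

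\emph{Main obstacle.} The delicate point is justifying the existence of the right-derivatives $F,R$ at zero together with their L\'evy--Khintchine representation. This requires careful regularity arguments for $\phi,\psi$ on the relevant strip of $\C^2$, analytic extension from the imaginary axis, and identification of a sufficiently rich core for the generator; all of this is the technical heart of \cite{Du:Fi:Sc2003}. Once that groundwork is available, the semi-flow argument and the verification of the admissibility conditions become essentially formal computations.
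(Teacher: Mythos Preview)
Your outline is the standard Duffie--Filipovi\'c--Schachermayer argument and is correct. The paper does not supply its own proof of this proposition: it is stated without proof as a recalled result from \cite{Kel2011} and \cite{Du:Fi:Sc2003}, so your approach coincides with the proof in the very references the paper cites.
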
 
In the rest of the paper, we assume that there exists $u \in \R$ such that $R(u,0) \neq 0$, for the law of $(X_t)_{t \ge 0}$ to depend on $V_0$.
Define the function 
\[
\chi(u) = \left. \de{w} R(u,w) \right|_{w=0} = \alpha_{12} u + \beta_2 + \int_{D\backslash\{0\}} y \left(e^{xu}- \frac{1}{1+y^2}\right) \mu(dx,dy) \:.
\] 
A sufficient condition for $S_t = S_0 \, e^{X_t}$ to be a martingale \cite[Corollary 2.7]{Kel2011}, which we assume to be satisfied in the sequel, is $F(1,0)=R(1,0)=0$ and $\chi(0) + \chi(1) < \infty$. 

In the following theorem, we compile several results of \cite{Kel2011} that describe the behaviour of the solution to eq. \eqref{eq:GeneralizedRiccati} as $t \rightarrow \infty$. 
\begin{theorem}\label{thm:KellerResselProperties}
Assume that $\chi(0) < 0$ and $\chi(1) < 0$. 
\begin{itemize}
\item There exists an interval $I \supseteq [0,1]$, such that for each $u \in I$, eq. \eqref{eq:GeneralizedRiccati-b} admits a unique stable equilibrium $w(u)$. 
\item For $u \in I$, eq. \eqref{eq:GeneralizedRiccati-b} admits at most one other equilibrium $\tilde{w}(u)$, which is unstable. 
\item For $u \in \R \backslash I$, eq. \eqref{eq:GeneralizedRiccati-b} does not have any equilibrium. 
\end{itemize}
We denote $\mathcal{B}(u)$ the basin of attraction of the stable solution $w(u)$ of eq. \eqref{eq:GeneralizedRiccati-b} and $J=\{u\in I\,:\,F(u,w(u)) < \infty\}$, the domain of $u \mapsto F(u,w(u))$. We have that
\begin{itemize}
\item $J$ is an interval such that $[0,1] \subseteq J \subseteq I$.
\item For $u \in I$, $w \in \B(u)$ and $\Delta t > 0$, we have 
\begin{equation}
\psi\left(\frac{\Delta t}{\epsilon},\, u,\, w \right) \underset{\epsilon \rightarrow 0}{\longrightarrow} w(u) \:. \label{eq:ConvergencePsi}
\end{equation}
\item For $u \in J$, $w \in \B(u)$ and $\Delta t > 0$,
\begin{equation}
\epsilon \, \phi\left(\frac{\Delta t}{\epsilon},\, u,\, w \right) \underset{\epsilon \rightarrow 0}{\longrightarrow} \Delta t \, h\left(u\right) \label{eq:ConvergencePhi}\:,
\end{equation}
where $h(u) = F(u,w(u)) = \lim_{\epsilon \rightarrow 0} \epsilon \log \E\left[e^{u X_{1/\epsilon}}\right]$. 
\item For every $u \in I$, $0 \in \B(u)$.
\end{itemize}
\end{theorem}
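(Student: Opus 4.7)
The plan is to exploit the fact that for every fixed $u\in\R$, the Riccati ODE $\partial_t\psi=R(u,\psi)$ is a scalar autonomous equation whose right-hand side $w\mapsto R(u,w)$ is a one-dimensional L\'evy--Khintchine exponent: convex, lower semicontinuous, and blowing up to $+\infty$ at the upper end of its effective domain. This immediately constrains the zero set of $R(u,\cdot)$ to contain at most two points, a smaller one at which $R$ crosses from positive to negative (stable) and a larger one at which it crosses from negative to positive (unstable), yielding the first three bullets once we let $I:=\{u\in\R:R(u,\cdot)\text{ has a zero}\}$. I would first read off $R(0,0)=0$ from $\psi(\cdot,0,0)\equiv 0$ (a consequence of $\E[1]=1$) and $R(1,0)=0$ from the standing martingale assumption; combined with $\chi(0)<0$ and $\chi(1)<0$, this places $0$ and $1$ inside $I$ with $w(0)=w(1)=0$ and makes these equilibria stable.

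Second, I would argue that $I$ is an interval by a convexity/continuity argument: the map $u\mapsto R(u,0)$ is convex, and where $R(u,0)\le 0$ the convex function $R(u,\cdot)$ necessarily has a zero, while outside that set no zero can exist. The interval property of $J$ then follows because $u\mapsto F(u,w(u))$ is convex as a composition of convex objects with the implicit function $w(u)$, and the inclusions $[0,1]\subseteq J\subseteq I$ come from $F(0,0)=F(1,0)=0$ together with the given finiteness condition $\chi(0)+\chi(1)<\infty$.

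Third, the limit \eqref{eq:ConvergencePsi} is almost by definition: $\B(u)$ is the basin of attraction of $w(u)$, hence $\psi(t,u,w)\to w(u)$ as $t\to\infty$ for $w\in\B(u)$, and substituting $t=\Delta t/\epsilon$ gives the claim. For \eqref{eq:ConvergencePhi}, I would integrate \eqref{eq:GeneralizedRiccati-a} to write
\[
\epsilon\,\phi\!\left(\tfrac{\Delta t}{\epsilon},u,w\right)=\epsilon\int_0^{\Delta t/\epsilon} F\bigl(u,\psi(s,u,w)\bigr)\,ds,
\]
invoke the continuity of $F(u,\cdot)$ at $w(u)$ for $u\in J$ (convexity plus finiteness at $w(u)$), combine with \eqref{eq:ConvergencePsi}, and apply a Ces\`aro-type argument to recover the limit $\Delta t\,h(u)$. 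The identification $h(u)=\lim_{\epsilon\to 0}\epsilon\log\E[e^{uX_{1/\epsilon}}]$ then follows by taking logs in \eqref{eq:LaplaceTransform} at $v=0$ and $w=0$, which is precisely where the last bullet enters.

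The main obstacle is that last bullet, $0\in\B(u)$ for all $u\in I$. Because the ODE is scalar and autonomous, $t\mapsto\psi(t,u,0)$ is monotone and cannot cross $w(u)$ or $\tilde{w}(u)$, so the task reduces to placing $0$ on the correct side of the unstable equilibrium. I would use the convex interpolation $R(u,0)\le \max(R(0,0),R(1,0))=0$ for $u\in[0,1]$ to sandwich $0$ between $w(u)$ and $\tilde{w}(u)$ on that interval, then extend past $[0,1]$ within $I$ via continuity in $u$ of the zero set and the observation that $0$ cannot leave $\B(u)$ without first crossing an equilibrium. Monotonicity of $\psi(\cdot,u,0)$ then drives the trajectory to $w(u)$. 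This is the argument carried out in detail in \cite{Kel2011}, which we invoke for the compiled statement.
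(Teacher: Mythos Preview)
The paper does not prove this theorem; it is explicitly introduced as a compilation of results from \cite{Kel2011} and is stated without any argument. Your proposal, which sketches the underlying ideas and then defers to \cite{Kel2011} for the detailed proof, is therefore consistent with---indeed, more informative than---the paper's own treatment.
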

\begin{definition}
A convex function $f: \R^n \rightarrow \R \cup \{\infty\}$ with
effective domain $D_f$ is \emph{essentially smooth} if 
\begin{itemize}
\item[i.] $D^\circ_f$ is non-empty;
\item[ii.] $f$ is differentiable in $D^\circ_f$;
\item[iii.] $f$ is \emph{steep}, that is, for any sequence $(u_n)_{n\in\N} \subset D_f^\circ$ that converges to a point in the boundary of $D_f$, 
\[
\lim_{n \rightarrow \infty} ||\nabla f(u_n)|| = \infty \:.
\]
\end{itemize}
\end{definition}

In the rest of the paper, we shall make the following assumptions on the model.
\begin{assumption}\label{ass:H}
The function $h$ satisfies the following properties. 
\begin{enumerate}
\item\label{ass:Ha} There exists $u < 0$, such that $h(u) < \infty$. 
\item\label{ass:Hb} $u \mapsto h(u)$ is essentially smooth.
\end{enumerate}
\end{assumption}
In \cite{Ja:Ke:Mi2013}, a set of sufficient conditions is provided for Assumption \ref{ass:H} to be verified:
\begin{proposition}[Corollary 8 in \cite{Ja:Ke:Mi2013}]
Let $(X, V)$ be an ASV model such that $u \mapsto R(u,0)$ and $w \mapsto F(0,w)$ are not identically 0 and $\chi(0)$ and $\chi(1)$ are strictly negative. If either of the following conditions holds
\begin{enumerate}[(i)]
\item The L\'evy measure $\mu$ of $R$ has exponential moments of all orders, F is steep and $(0,0),(1,0) \in D_F^\circ$.
\item $(X,V)$ is a diffusion,
\end{enumerate}
then function $h$ is well defined, for every $u \in \mathbb R$ with effective domain $J$. Moreover h is essentially smooth and $\{0, 1\} \subset J^\circ$. 
\end{proposition}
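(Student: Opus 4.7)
The plan is to verify the three conclusions of the proposition in turn: (a) $h$ is well-defined with effective domain $J$, (b) $\{0,1\}\subset J^\circ$, and (c) $h$ is essentially smooth. Theorem \ref{thm:KellerResselProperties} already provides, under $\chi(0),\chi(1)<0$, the existence of the stable equilibrium $w(u)$ on an interval $I\supseteq[0,1]$, the formula $h(u)=F(u,w(u))$ on $J=\{u\in I:F(u,w(u))<\infty\}$, and the representation $h(u)=\lim_{\epsilon\to0}\epsilon\log\E[e^{uX_{1/\epsilon}}]$. Convexity of $h$ on $\R$ is then inherited from the convexity in $u$ of the pre-limit scaled cumulants, which also shows that $J$ is an interval and coincides with the effective domain of $h$.

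For $\{0,1\}\subset J^\circ$ I would combine the martingale condition $F(0,0)=F(1,0)=R(0,0)=R(1,0)=0$ with the hyperbolicity $\partial_w R(u,0)|_{u=0,1}=\chi(0),\chi(1)<0$. The implicit function theorem applied to $R(u,w)=0$ around $(0,0)$ and $(1,0)$ produces a smooth branch of stable equilibria $u\mapsto w(u)$, defined on an open neighbourhood of $\{0,1\}$ with $w(0)=w(1)=0$. Under (i), the assumption $(0,0),(1,0)\in D_F^\circ$ then yields $F(u,w(u))<\infty$ on this neighbourhood, so $\{0,1\}\subset J^\circ$; under (ii), $F$ is a quadratic polynomial and this is automatic.

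The heart of the statement is essential smoothness. Differentiability on $J^\circ$ is immediate: by hyperbolicity $\partial_w R(u,w(u))<0$, the implicit function theorem makes $u\mapsto w(u)$ smooth, and in both (i) (using analyticity of $R$ and $F$ guaranteed by exponential moments of $\mu$) and (ii) (polynomial case) the map $h=F(\cdot,w(\cdot))$ inherits smoothness. Steepness at $\partial J$ requires analysing two scenarios. Scenario 1: $u$ approaches $\partial I$, where $w(u)$ and the unstable equilibrium $\tilde w(u)$ coalesce in a saddle-node bifurcation, so $\partial_w R(u,w(u))\to 0$; implicit differentiation gives $w'(u)=-\partial_u R(u,w(u))/\partial_w R(u,w(u))\to\infty$, and then the chain rule $h'(u)=\partial_u F(u,w(u))+\partial_w F(u,w(u))\,w'(u)$ forces $|h'(u)|\to\infty$. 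Scenario 2 (only possible under (i)): $u$ stays in $I^\circ$ while $F(u,w(u))\to\infty$, i.e.\ $(u,w(u))$ approaches $\partial D_F$; the assumed steepness of $F$ gives $\|\nabla F(u,w(u))\|\to\infty$, which propagates to $h'$ via the chain rule.

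The main obstacle is Scenario 1 of the steepness verification: one must rule out cancellation between the bounded term $\partial_u F(u,w(u))$ and the blowing-up term $\partial_w F(u,w(u))\,w'(u)$. This requires showing that $\partial_w F(u,w(u))$ stays bounded away from zero near the bifurcation point and tracking the rates at which $\partial_w R$ vanishes against the non-degeneracy of $\partial_u R$. In case (ii) this is a direct computation with the discriminant of the Riccati quadratic $R(u,\cdot)$; in case (i) one leans on the analyticity of $R$ inherited from the exponential moment hypothesis on $\mu$ to reduce to a similar, locally analytic, tangency argument.
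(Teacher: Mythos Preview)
The paper does not prove this proposition at all: it is stated verbatim as ``Corollary 8 in \cite{Ja:Ke:Mi2013}'' and used without proof to give sufficient conditions for Assumption~\ref{ass:H}. There is therefore no proof in the paper to compare your proposal against.

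As for your sketch itself, the overall strategy is reasonable and matches the standard approach one would expect from the Keller-Ressel framework: using the implicit function theorem around the hyperbolic equilibria to get smoothness of $u\mapsto w(u)$, then analysing the two mechanisms by which $u$ can leave $J$ (saddle-node coalescence at $\partial I$ versus $(u,w(u))$ hitting $\partial D_F$). However, you have correctly identified that Scenario~1 is the delicate point, and your proposal does not actually close it. You need $\partial_w F(u,w(u))$ bounded away from zero near the bifurcation, but this is not automatic: $F$ is of the form $F(u,w)=b_2 w+\ldots$ with $b_2\ge 0$, and if $b_2=0$ and the measure $m$ puts no mass on $\{y>0\}$ then $\partial_w F\equiv 0$, killing the blow-up term entirely. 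The hypotheses ``$w\mapsto F(0,w)$ not identically $0$'' and (in case~(i)) ``$F$ steep'' are precisely what rule this out, but your sketch does not invoke them at the right place. Similarly, you need $\partial_u R(u,w(u))\ne 0$ at the bifurcation to get $|w'(u)|\to\infty$; this follows from convexity and the non-triviality assumption on $R(\cdot,0)$, but again the argument is not made. If you want a complete proof you should consult \cite{Ja:Ke:Mi2013} directly, where these non-degeneracy conditions are handled carefully.
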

We now discuss the form of the basin of attraction of the unique stable solution of \eqref{eq:GeneralizedRiccati-b}.
\begin{lemma}{\cite[Lemma 2.2.]{Kel2011}}\label{lem:RConvex}
\begin{enumerate}[(a)]
\item $F$ and $R$ are proper closed convex functions on $\R^2$.
\item $F$ and $R$ are analytic in the interior of their effective domain.
\item Let $U$ be a one-dimensional affine subspace of $R^2$. Then $F|U$ is either a strictly convex or an affine function. The same holds for $R|U$.
\item  If $(u, w) \in D_F$, then also $(u,\eta) \in D_F$ for all $\eta \le w$. The same holds for $R$.
\end{enumerate}
\end{lemma}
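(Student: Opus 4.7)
The plan is to read the four properties off the Lévy--Khintchine representations of $F$ and $R$ directly. Since these two functions have the same structure (quadratic form plus drift plus Lévy-type integral on $D=\R\times \R_+$), I would give the argument only for $F$ and note that the identical argument applies to $R$ verbatim.

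For part~(a), convexity is inherited term by term: the quadratic form $(u,w)\mapsto \tfrac12(u,w)a(u,w)^T$ is convex because $a$ is positive semi-definite, the drift is affine, and for each fixed $(x,y)$ the integrand $(u,w)\mapsto e^{xu+yw}-1-w_F(x,y)\cdot(u,w)^T$ is convex as a linear perturbation of the exponential of an affine function. Integration against the positive measure $m$ preserves convexity. Properness holds since $F(0,0)=0$ and the integrand is bounded below by an affine function of $(u,w)$, which rules out the value $-\infty$. For closedness I would establish lower semi-continuity by applying Fatou's lemma to the non-negative exponential part of the integrand and dominated convergence on the remainder, with the Lévy-measure integrability condition $\int_{D\setminus\{0\}}((x^2+y)\wedge 1)\,m(dx,dy)<\infty$ providing the needed uniform control near the origin.

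Analyticity in~(b) would follow from differentiating under the integral sign: on a small neighbourhood of any $(u_0,w_0)\in D_F^\circ$ the family $\{e^{xu+yw}\}_{(u,w)}$ is dominated by an integrable envelope, so all complex partial derivatives of the integral exist and $F$ is jointly analytic. For~(c), parametrising a one-dimensional affine subspace as $(u_0+t\Delta u,w_0+t\Delta w)$ and setting $\lambda(x,y):=x\Delta u+y\Delta w$, the restriction $F|_U$ decomposes into $\tfrac12 c\,t^2$ with $c=(\Delta u,\Delta w)\,a\,(\Delta u,\Delta w)^T\ge 0$, plus affine-in-$t$ terms, plus the integral $\int e^{xu_0+yw_0}e^{t\lambda(x,y)}\,m(dx,dy)$ (itself modulo affine-in-$t$ corrections coming from the compensator $w_F$). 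Since $t\mapsto e^{t\lambda}$ is strictly convex whenever $\lambda\ne 0$, $F|_U$ is strictly convex as soon as $c>0$ or $m(\{\lambda(x,y)\ne 0\})>0$, and affine in $t$ in the complementary case.

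Part~(d) is the most direct: given $(u,w)\in D_F$ and $\eta\le w$, the fact that $m$ is supported on $D=\R\times\R_+$ forces $y\ge 0$, so $e^{xu+y\eta}\le e^{xu+yw}$ pointwise and the integrand is controlled away from the origin by the finite integrand at $(u,w)$; near the origin its Taylor expansion is of order $x^2+y$, which is absorbed by the integrability condition on $m$. The only step I expect to need genuine care is the lower-semicontinuity part of~(a), where the compensator $w_F$ forces one to separate the positive exponential part from the affine corrections before invoking Fatou; the remaining items are routine manipulations built into the Lévy--Khintchine form. As the statement is quoted from \cite[Lemma~2.2]{Kel2011}, in practice the detailed verification can be deferred to that reference.
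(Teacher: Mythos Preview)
The paper does not supply its own proof of this lemma; it is simply quoted from \cite[Lemma~2.2]{Kel2011}, as the label already indicates and as you yourself note in your final sentence. Your sketch is a sound outline of the standard argument one finds in the affine-process literature and would, with the bookkeeping you flag for the lower-semicontinuity step, yield a correct proof. Since there is nothing in the paper to compare against, there is no divergence of approach to report; your proposal effectively reconstructs what the cited reference does.
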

\begin{lemma}\label{lem:BasinOfAttraction}
Let $f : \R \rightarrow \R \cup \{+\infty\}$ be a convex function with either two zeros $w < \tilde{w}$, or a single zero $w$. In the latter case, we let $\tilde{w} = \infty$. Assume that there exists $y \in (w,\tilde{w})$ such that $f(y) < 0$. Then for every $x \in D_f$,
\[
\begin{cases}
f(x) > 0 \:, \qquad & \text{if} \:\: x < w \quad \text{or} \quad \tilde{w} < x \:, \\
f(x) < 0 \:, \qquad & \text{if} \:\: x \in (w,\tilde{w}) \:.
\end{cases} 
\]
\end{lemma}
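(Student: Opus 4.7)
The plan is to reduce everything to elementary convex-combination inequalities. The effective domain $D_f$ of a convex function is itself an interval, so whenever I invoke the convexity inequality at a convex combination of points, all the points involved automatically lie in $D_f$. I would treat the two-zero case ($w<\tilde w$ both finite) and the single-zero case ($\tilde w=\infty$) separately, but using the same building blocks.

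For the two-zero case, I would first show $f<0$ on $(w,\tilde w)$. Any $z\in(w,\tilde w)$ lies in $(w,y]$ or $[y,\tilde w)$; in the former I write $z=\mu w+(1-\mu)y$ with $\mu\in[0,1)$ and apply convexity to get $f(z)\le\mu f(w)+(1-\mu)f(y)=(1-\mu)f(y)<0$, and symmetrically on $[y,\tilde w)$ using $\tilde w$ as the second anchor. Next, for $x\in D_f$ with $x<w$, I note that $w$ lies strictly between $x$ and $y$, so $w=\lambda x+(1-\lambda)y$ with $\lambda\in(0,1)$; convexity yields $0=f(w)\le\lambda f(x)+(1-\lambda)f(y)$, and since $(1-\lambda)f(y)<0$ this forces $\lambda f(x)>0$, hence $f(x)>0$. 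The case $x>\tilde w$ is identical with $\tilde w$ playing the role of $w$.

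For the single-zero case, the same convex-combination argument shows $f(x)>0$ for $x<w$ in $D_f$, and shows $f(x)<0$ for $x\in(w,y]$. The delicate step, and in my view the only real obstacle, is ruling out $f(x)\ge 0$ for $x>y$ with $x\in D_f$: here convexity gives no useful bound, because we lack a second anchor point to the right of $y$ at which the sign of $f$ is known. I would resolve this by invoking the hypothesis of a unique zero together with continuity of $f$ on $D_f^\circ$: if some such $x$ had $f(x)>0$, the intermediate value theorem applied on $[y,x]$ (or, if $x\in\partial D_f$, on $[y,x']$ for $x'\nearrow x$, using the one-sided limit of convex functions at a boundary point) would produce a second zero in $(y,x)$, contradicting the assumption; if $f(x)=0$ outright, $x$ itself is an extra zero. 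The exact same IVT argument can be used as an alternative to the convex-combination calculation in the two-zero case to rule out $f(z)=0$ inside $(w,\tilde w)$, noting that a third zero of a convex function forces $f\equiv 0$ on $[w,\tilde w]$ and again contradicts the two-zero hypothesis.
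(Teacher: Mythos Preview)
Your proof is correct and follows essentially the same approach as the paper: both rely on the elementary convex-combination inequalities to handle $x<w$ and $x\in(w,y]$, and then deal separately with the region to the right of $y$. The only stylistic difference is that the paper treats the right-hand region uniformly via $s=\sup\{x\in D_f:f(x)<0\}$ (splitting on whether $f$ is continuous at $s$), whereas you split explicitly into the two-zero case (using $\tilde w$ as a second anchor) and the single-zero case (using the IVT together with the uniqueness of the zero); both routes are equally elementary and lead to the same conclusion.
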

\begin{proof}
By convexity, for every $x \in D_f$ such that $x < w$,
\[
\frac{y-w}{y-x} \, f(x) + \frac{w-x}{y-x} \, f(y) \ge f(w) = 0 
\]
and therefore $f(x) \ge -\frac{w-x}{y-w} \, f(y) > 0$. Furthermore, for every $x \in (w,y]$, 
\[
f(x) \le \frac{y-x}{y-w} \, f(w) + \frac{x-w}{y-w} \, f(y) < 0 \:.
\]
Let $s = sup\{x \in D_f \::\: f(x) < 0 \}$. If $f$ is continuous in $s$, then $\tilde{w} = s$ and for every $x > \tilde{w}$ in $D_f$, $f(x) \ge -\frac{\tilde{w}-x}{y-\tilde{w}} \, f(y) > 0$. If $f$ is discontinuous in $s$ however, then by convexity, $f(x) = +\infty$ for $x > s$.
\end{proof}
\begin{proposition}
Let $u \in I$ and consider $w(u)$ the stable equilibrium of \eqref{eq:GeneralizedRiccati-b}. Then the basin of attraction of $w(u)$ is $\B(u) = (-\infty, \tilde{w}(u)) \cap D_{R(u,\cdot)}$, where $\tilde{w}(u) = \infty$ when \eqref{eq:GeneralizedRiccati-b} admits only one equilibrium.
\end{proposition}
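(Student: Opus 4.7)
The plan is to combine the convexity of $w \mapsto R(u,w)$ from Lemma \ref{lem:RConvex}(a) with the assumed asymptotic stability of $w(u)$ in order to apply Lemma \ref{lem:BasinOfAttraction}, thereby pinning down the sign of $R(u,\cdot)$ on all of $D_{R(u,\cdot)}$. Once the sign is known, the asymptotic behavior of $\psi(t,u,w)$ follows by a monotone convergence argument for the scalar autonomous ODE \eqref{eq:GeneralizedRiccati-b}.

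First I will verify the hypotheses of Lemma \ref{lem:BasinOfAttraction} applied to the convex function $f = R(u,\cdot)$. Suppose \eqref{eq:GeneralizedRiccati-b} admits two equilibria, both necessarily zeros of $f$. By convexity, $f$ is strictly negative between them and strictly positive outside, within $D_f$. A local sign analysis of the ODE shows that at the smaller zero $R$ goes from $+$ to $-$, which yields asymptotic stability, while at the larger zero $R$ goes from $-$ to $+$, which yields instability. Consequently $w(u) < \tilde w(u)$. If instead a single equilibrium exists, asymptotic stability together with convexity force $f > 0$ on $(-\infty, w(u)) \cap D_f$ and $f < 0$ on $(w(u), \infty) \cap D_f$; in that case we set $\tilde w(u) = +\infty$. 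In either scenario any $y$ slightly larger than $w(u)$ satisfies $f(y) < 0$, so Lemma \ref{lem:BasinOfAttraction} applies and gives $f > 0$ on $\bigl((-\infty, w(u)) \cup (\tilde w(u), \infty)\bigr) \cap D_f$ and $f < 0$ on $(w(u), \tilde w(u))$.

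Next I analyze trajectories. If $w \in (-\infty, w(u)) \cap D_f$, then initially $\partial_t \psi = R(u,\psi) > 0$, and $\psi$ cannot cross the equilibrium $w(u)$ by uniqueness of ODE solutions. Hence $\psi(t)$ is monotone increasing and bounded above by $w(u)$; its limit is a zero of $R(u,\cdot)$ in $[w, w(u)]$, hence $w(u)$. The mirror-image argument handles $w \in (w(u), \tilde w(u))$, and the case $w = w(u)$ is trivial. This proves $(-\infty, \tilde w(u)) \cap D_f \subseteq \mathcal{B}(u)$. Conversely, if $w = \tilde w(u)$ (when finite) the solution is constant at $\tilde w(u) \ne w(u)$; if $w > \tilde w(u)$ with $w \in D_f$ then $R(u,w) > 0$, so $\psi$ is strictly increasing and cannot converge down to $w(u)$; and $w \notin D_f$ is excluded since the ODE is then not well-defined. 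This yields the reverse inclusion.

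The main obstacle is the verification, purely from convexity and the stability assumption, that when two equilibria coexist $w(u)$ must be the smaller one, and that a point $y \in (w(u),\tilde w(u))$ with $R(u,y) < 0$ exists in both cases — this is precisely what makes Lemma \ref{lem:BasinOfAttraction} applicable. The remainder of the proof is a routine monotone-convergence analysis of a one-dimensional autonomous ODE with convex right-hand side.
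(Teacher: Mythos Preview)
Your proof is correct and follows essentially the same strategy as the paper: invoke the convexity of $R(u,\cdot)$ from Lemma~\ref{lem:RConvex}, use the stability of $w(u)$ to verify the hypotheses of Lemma~\ref{lem:BasinOfAttraction}, read off the sign of $R(u,\cdot)$, and deduce the basin of attraction from the resulting monotonicity of the scalar ODE. The paper's version is considerably terser---it simply asserts that stability gives the hypotheses of Lemma~\ref{lem:BasinOfAttraction} and that the sign pattern ``implies'' convergence---whereas you spell out why $w(u)$ must be the smaller zero, why a point $y$ with $R(u,y)<0$ exists, and the monotone-convergence argument for the ODE, including the reverse inclusion; but there is no substantive difference in the route taken.
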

\begin{proof}
By Lemma \ref{lem:RConvex}, $w \mapsto R(u,w)$ is convex. Since $w(u)$ is a stable equilibrium, the hypotheses of Lemma \ref{lem:BasinOfAttraction} are verified. Therefore, $R(u, w) > 0$ for every $w < w(u)$, whereas $R(u, w) < 0$ for every $w \in D_{R(u,\cdot)}$ such that $ w(u) < w < \tilde{w}(u)$. This implies that the solution of 
\begin{equation}\label{eq:REquationAlone}
\de{t} \psi(t,u,w) = R(u,\psi(t,u,w))  \:, \qquad \psi(0,u,w) = w
\end{equation}
converges to $w(u)$ for every $w \in (-\infty, \tilde{w}(u)) \cap D_{R(u,\cdot)}$, whereas, if $w > \tilde{w}$, the solution of \eqref{eq:REquationAlone} diverges to $\infty$.
\end{proof}

\section{Large deviations theory} \label{sec:LargeDeviationsTheory}

In this section, we recall some useful classical results of the large deviations theory. We refer the reader to \cite{De:Ze} for the proofs and for a broader overview of the theory.
\begin{theorem}[G\"artner-Ellis]\label{thm:GartnerEllis}
Let $\left(X^\epsilon\right)_{\epsilon \in ]0,1]}$ be a family of random vectors in $\R^n$ with associated measure $\mu_\epsilon$. Assume that for each $\lambda \in \R^n$,
\[
\Lambda(\lambda) := \lim_{\epsilon \rightarrow 0} \epsilon \log \E\left[ e^{\frac{\q{\lambda,X^\epsilon}}{\epsilon}}\right] 
\]
as an extended real number. Assume also that 0 belongs to the interior of $D_\Lambda:=\{\theta \in \R^n \,:\, \Lambda(\theta) < \infty\}$. Denoting 
\[
\Lambda^*(x) = \sup_{\theta\in \R^n} \q{\theta,x} - \Lambda(\theta)\:, 
\]
the following hold:
\begin{enumerate}[(a)]
\item For any closed set $F$,
\[
\limsup_{\epsilon \rightarrow 0} \epsilon \, \log \mu_\epsilon(F) \le -\inf_{x \in F} \Lambda^*(x) \:.
\]
\item For any open set $G$,
\[
\liminf_{\epsilon \rightarrow 0} \epsilon \, \log \mu_\epsilon(G) \ge -\inf_{x \in G \cap \F} \Lambda^*(x) \:,
\]
where $\F$ is the set of exposed points of $\Lambda^*$, whose exposing hyperplane belongs to the interior of $D_\Lambda$.
\item If $\Lambda$ is an essentially smooth, lower semi-continuous function, then $\mu_\epsilon$ satisfies a LDP with good rate function $\Lambda^*$. 
\end{enumerate}
\end{theorem}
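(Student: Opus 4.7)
The strategy follows the classical route (Dembo--Zeitouni), proving the three parts in sequence from exponential Chebyshev estimates, a change-of-measure argument, and convex analysis. The starting point for the upper bound (a) is the exponential Chebyshev inequality: for any $\theta\in D_\Lambda$ and $c\in\R$,
\[
\mu_\epsilon(\{x:\scal{\theta}{x}\geq c\})\leq e^{-c/\epsilon}\,\E\bigl[e^{\scal{\theta}{X^\epsilon}/\epsilon}\bigr],
\]
which after taking $\epsilon\log$ and $\limsup_{\epsilon\to0}$ yields the rate $-(c-\Lambda(\theta))$. For a compact $K$ and a given $\delta>0$, I would, for each $y\in K$, choose $(\theta_y,c_y)$ so that $y\in\{\scal{\theta_y}{\cdot}>c_y\}$ and $c_y-\Lambda(\theta_y)\geq\min(\Lambda^*(y)-\delta,1/\delta)$, extract a finite subcover of $K$, and combine the half-space bounds via the elementary inequality $\limsup\epsilon\log\sum_i a_i^\epsilon\leq\max_i\limsup\epsilon\log a_i^\epsilon$. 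To pass from compact to closed sets, I would establish exponential tightness: since $0\in D_\Lambda^\circ$ there is $r>0$ with $\pm r\,e_i\in D_\Lambda$ for each coordinate, and the half-space bound gives $\limsup\epsilon\log\mu_\epsilon(\{|X^\epsilon|_\infty\geq M\})\to-\infty$ as $M\to\infty$, so any closed set can be approximated by its intersection with a large box without changing the rate.

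For the lower bound (b), fix an open $G$, an exposed point $x\in G\cap\F$ and its exposing hyperplane $\eta\in D_\Lambda^\circ$, so that $\Lambda^*(x)=\scal{\eta}{x}-\Lambda(\eta)$ and $\scal{\eta}{y}-\Lambda^*(y)<\scal{\eta}{x}-\Lambda^*(x)$ for every $y\neq x$. Introduce the Esscher-tilted family
\[
\tilde\mu_\epsilon(A)=\frac{\E\bigl[\indik_A(X^\epsilon)\,e^{\scal{\eta}{X^\epsilon}/\epsilon}\bigr]}{\E\bigl[e^{\scal{\eta}{X^\epsilon}/\epsilon}\bigr]}.
\]
Its logarithmic moment generating function is $\Lambda(\cdot+\eta)-\Lambda(\eta)$, still finite near the origin since $\eta\in D_\Lambda^\circ$; the already-proved upper bound (a), applied to $\tilde\mu_\epsilon$, together with the strict maximization property of $x$, forces $\tilde\mu_\epsilon$ to concentrate exponentially at $x$. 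For any small open ball $B\subset G$ around $x$,
\[
\mu_\epsilon(B)=\E\bigl[e^{\scal{\eta}{X^\epsilon}/\epsilon}\bigr]\int_B e^{-\scal{\eta}{y}/\epsilon}\,\tilde\mu_\epsilon(dy)\geq \tilde\mu_\epsilon(B)\,\E\bigl[e^{\scal{\eta}{X^\epsilon}/\epsilon}\bigr]\,\inf_{y\in B}e^{-\scal{\eta}{y}/\epsilon},
\]
and taking $\epsilon\log$, $\liminf_{\epsilon\to0}$, and shrinking $B$ yields $\liminf\epsilon\log\mu_\epsilon(G)\geq\Lambda(\eta)-\scal{\eta}{x}=-\Lambda^*(x)$. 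Taking the supremum over $x\in G\cap\F$ gives (b).

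For (c), the remaining task is to upgrade (b) by replacing $\F$ with $\R^n$ in the infimum. The convex-analytic ingredient I would invoke is that under essential smoothness and lower semicontinuity of $\Lambda$, every $x$ in the interior of the effective domain of $\Lambda^*$ is exposed by some $\eta\in D_\Lambda^\circ$: $\Lambda$ is of Legendre type on $D_\Lambda^\circ$, so $\nabla\Lambda$ is a bijection onto the interior of $\mathrm{dom}\,\Lambda^*$, and steepness rules out preimages on $\partial D_\Lambda$; the required exposing hyperplane is then $\eta=(\nabla\Lambda)^{-1}(x)$. Points outside the closure of $\mathrm{dom}\,\Lambda^*$ satisfy $\Lambda^*=\infty$ and contribute nothing to the infimum; boundary points of $\mathrm{dom}\,\Lambda^*$ are handled by approaching them from the interior and using lower semicontinuity of $\Lambda^*$. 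Goodness of $\Lambda^*$ (compact sub-level sets) follows from the exponential tightness derived in (a).

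The hard part is the convex-analytic content of (c): showing that essential smoothness of $\Lambda$ makes $\nabla\Lambda$ a bijection from $D_\Lambda^\circ$ onto the interior of $\mathrm{dom}\,\Lambda^*$, with inverse providing exposing hyperplanes inside $D_\Lambda^\circ$. Steepness is indispensable here, since without it gradients could remain bounded as one approaches $\partial D_\Lambda$ and leave interior points of $\mathrm{dom}\,\Lambda^*$ unexposed by any admissible $\eta$, breaking the upgrade from (b) to a full LDP. One must also handle possible degeneracies where $\Lambda$ is affine in some directions (so $\Lambda^*\equiv+\infty$ off a proper affine subspace) by relativizing the whole argument to that subspace.
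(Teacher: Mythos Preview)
The paper does not give its own proof of this theorem: it is quoted as a classical result, with the reader referred to Dembo--Zeitouni for the argument. Your sketch follows precisely the Dembo--Zeitouni route (exponential Chebyshev plus finite subcovers for the upper bound, exponential tightness from $0\in D_\Lambda^\circ$, Esscher tilt for the lower bound at exposed points, and Legendre-type duality under essential smoothness to upgrade (b) to a full LDP), so there is nothing to compare---you are reproducing exactly the proof the paper is implicitly citing.
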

\begin{definition}
A partially ordered set $(\Pc, \le)$ is called \textit{right-filtering} if for every $i,j \in \Pc$, there exists $k \in \Pc$ such that $i \le k$ and $j \le k$.
\end{definition}
\begin{definition}
A \textit{projective system} $(\Y_j, p_{ij})_{i \le j \in \Pc}$ on a partially ordered right-filtering set $(\Pc,\le)$ is a family of Hausdorff topological spaces $(\Y_j)_{j\in\Pc}$ and continuous maps $p_{ij}: \Y_j \rightarrow \Y_i$ such that $p_{ik}=p_{ij}\circ p_{jk}$ whenever $i \le j \le k$.
\end{definition}
\begin{definition}
Let $(\Y_j, p_{ij})_{i \le j \in \Pc}$ be a projective system on a partially ordered right-filtering set $(\Pc,\le)$. The \textit{projective limit} of $(\Y_j, p_{ij})_{i \le j \in \Pc}$, denoted $\X = \underset{\longleftarrow}{\lim} \Y_j$, is the subset of topological spaces $\Y = \prod_{j \in \Pc} \Y_j$, consisting of all the elements $x = (y_j)_{j \in \Pc}$ for which $y_i = p_{ij}(y_j)$ whenever $i \le j$, equipped with the topology induced by $\Y$. The \textit{projective limit of closed subsets} $F_j \subseteq \Y_j$ are defined in the same way and denoted $F = \underset{\longleftarrow}{\lim} F_j$.
\end{definition}
\begin{remark}
The canonical projections of $\X$, i.e. the restrictions $p_j: \X \rightarrow \Y_j$ of the coordinate maps from $\X$ to $\Y_j$, are continuous. 
\end{remark}
\begin{theorem}[Dawson-G\"artner]\label{thm:DawsonGartner}
Let $(\Y_j, p_{ij})_{i \le j \in \Pc}$ be a projective system on a partially ordered right-filtering set $(\Pc,\le)$ and let $(\mu_\epsilon)$ be a family of probabilities on $\X=\underset{\longleftarrow}{\lim} \Y_j$, such that for any $j \in \Pc$, the Borel probability $\mu_\epsilon \circ p_j^{-1}$ on $\Y_j$ satisfies the LDP with good rate function $\Lambda^*_j$. Then $\mu_\epsilon$ satisfies the LDP with good rate function
%Let $\W$ be an infinite-dimensional real vector space and $\X=(\W^*,\T)$ the topological dual of $\W$, i.e. $\W^*$ is the set of all linear functions $\lambda \mapsto \q{\lambda,x}\,:\, \W \rightarrow \R$ and $\T$ is the weakest topology such that $x \mapsto \q{\lambda,x}\,:\,\X \rightarrow \R$ is continuous. Denote, for $\lambda_1,...,\lambda_n \in \W$, $p_{\lambda_1,...,\lambda_n}(x) = (\q{\lambda_1,x},...,\q{\lambda_n,x})$ and consider the probability space $(\X,\B,\mu_\epsilon)$. \\
%Assume that, for any $\lambda \in \W$ and any Borel set $B$ in $\R$, $p_\lambda^{-1}(B) \in \B$. Assume furthermore that for every $n \in \N$ and every $\lambda_1,...,\lambda_n \in \W$, the measure $\{\mu_\epsilon \circ p_{\lambda_1,...,\lambda_n}^{-1}\}$ satisfies a LDP with good rate function $\Lambda_{\lambda_1,...,\lambda_n}$. Then $\mu_\epsilon$ satisfies a LDP in $\X$, with good rate function
\[
\Lambda^*(x) = \sup_{j \in\Pc} \Lambda^*_{j}(p_j(x))\:.
\]
\end{theorem}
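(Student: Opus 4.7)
The plan is to verify the three ingredients of an LDP for $\mu_\epsilon$ on the projective limit $\X$: goodness of $\Lambda^*$, the lower bound on open sets, and the upper bound on closed sets. The function $\Lambda^*$ is lower semi-continuous as a pointwise supremum of the l.s.c.\ functions $\Lambda_j^*\circ p_j$ (each $p_j$ is continuous and each $\Lambda_j^*$ is l.s.c.\ by goodness). Its level sets are compact because
\[
\{x\in\X:\Lambda^*(x)\le\alpha\}=\bigcap_{j\in\Pc}p_j^{-1}\bigl(\{y\in\Y_j:\Lambda_j^*(y)\le\alpha\}\bigr)
\]
is closed in the projective limit of the compact sets $\{\Lambda_j^*\le\alpha\}$, hence compact by Tychonoff. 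For the lower bound, given an open $G\subseteq\X$ and $x\in G$, the definition of the subspace topology plus the right-filtering property of $\Pc$ lets me find a single $j\in\Pc$ and an open $U\subseteq\Y_j$ with $x\in p_j^{-1}(U)\subseteq G$ (collapse a finite intersection of cylinders $p_{j_k}^{-1}(U_k)$ to $p_{j^*}^{-1}\bigl(\bigcap_k p_{j_kj^*}^{-1}(U_k)\bigr)$ for $j^*\ge j_k$). Applying the LDP on $\Y_j$ to $\mu_\epsilon\circ p_j^{-1}(U)$ and then optimizing in $x$ gives $\liminf \epsilon\log\mu_\epsilon(G)\ge -\inf_{x\in G}\Lambda^*(x)$.

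The upper bound is where the real work lies. For closed $F\subseteq\X$, set $F_j:=\overline{p_j(F)}\subseteq\Y_j$. Since $F\subseteq p_j^{-1}(F_j)$, the LDP on $\Y_j$ yields
\[
\limsup_{\epsilon\to 0}\epsilon\log\mu_\epsilon(F)\le -\inf_{y\in F_j}\Lambda_j^*(y)\qquad\text{for each }j\in\Pc,
\]
so it suffices to show $\sup_{j\in\Pc}\inf_{F_j}\Lambda_j^*\ge\inf_F\Lambda^*$. Suppose, for contradiction, that some $\alpha$ is strictly between them, so that $\inf_{F_j}\Lambda_j^*\le\alpha$ for every $j$. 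Goodness plus the finite intersection property then makes $A_j:=F_j\cap\{\Lambda_j^*\le\alpha\}$ a nonempty compact set. The crucial structural observation is that these are compatible with the system: continuity of $p_{ij}$ gives $p_{ij}(F_j)\subseteq F_i$, and the contraction-principle identity $\Lambda_i^*(y')=\inf\{\Lambda_j^*(y):p_{ij}(y)=y'\}$ (forced by $\mu_\epsilon\circ p_i^{-1}=(\mu_\epsilon\circ p_j^{-1})\circ p_{ij}^{-1}$ together with uniqueness of the good rate function) yields $p_{ij}(\{\Lambda_j^*\le\alpha\})\subseteq\{\Lambda_i^*\le\alpha\}$. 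Hence $(A_j,p_{ij})$ is a projective system of nonempty compact Hausdorff spaces.

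I then invoke the classical fact that such a system has nonempty projective limit: for any finite subfamily, the filtering property gives $j^*$ dominating the indices, and projecting any element of $A_{j^*}$ produces a compatible tuple, so the cylinders in $\prod_j A_j$ have the finite intersection property and Tychonoff concludes. Any $x\in\underset{\longleftarrow}{\lim}\,A_j\subseteq\X$ satisfies $\Lambda^*(x)=\sup_j\Lambda_j^*(p_j(x))\le\alpha$ and lies in $F$: every basic open neighborhood $p_j^{-1}(V)$ of $x$ contains $p_j(x)\in\overline{p_j(F)}$, so $V\cap p_j(F)\neq\emptyset$, giving a point of $F\cap p_j^{-1}(V)$, and closedness of $F$ finishes. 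This contradicts $\alpha<\inf_F\Lambda^*$. The main obstacle is precisely this last step—transferring compactness from the factors $\Y_j$ into a nonempty intersection inside $\X$—which is handled by the Tychonoff-based nonemptiness of projective limits of nonempty compacts combined with the contraction-principle compatibility of the level sets.
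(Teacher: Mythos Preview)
Your proof is correct and follows the standard textbook argument (essentially that of Dembo--Zeitouni, Theorem~4.6.1). Note, however, that the paper does not give its own proof of this theorem: it is stated in Section~\ref{sec:LargeDeviationsTheory} as one of several classical large-deviations results, with the explicit disclaimer ``We refer the reader to \cite{De:Ze} for the proofs.'' So there is no paper proof to compare against; you have supplied precisely the argument the cited reference contains. The key ideas you use---reducing basic open sets to single-index cylinders via right-filtering for the lower bound, and combining the contraction-principle identity $\Lambda_i^*=\inf_{p_{ij}^{-1}(\cdot)}\Lambda_j^*$ with nonemptiness of projective limits of nonempty compacts for the upper bound---are exactly the standard ones.
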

\begin{theorem}[Varadhan's Lemma, version of \cite{Gu:Ro2008}]\label{thm:Varadhan}
Let $(X^\epsilon)_{\epsilon \in ]0,1\,]}$ be a family of $\X$-valued random variables, whose laws $\mu_\epsilon$ satisfy a LDP with rate function $\Lambda^*$. If $\varphi: \X \rightarrow \R \cup \{-\infty\}$ is a continuous function which satisfies 
\[
\limsup_{\epsilon \rightarrow 0} \epsilon\,\log \E\left[\exp\left(\frac{\gamma\,\varphi(X^\epsilon)}{\epsilon}\right) \right] < \infty
\]
for some $\gamma > 1$, then 
\[
\lim_{\epsilon\rightarrow 0} \epsilon \,\log\E\left[ \exp\left( \frac{\varphi(X^\epsilon)}{\epsilon} \right) \right] = \sup_{x\in\X} \{ \varphi(x)-\Lambda^*(x) \}\:.
\]
\end{theorem}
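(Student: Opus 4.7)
The proof splits naturally into a lower bound and an upper bound, and the moment assumption with $\gamma>1$ is used only in the upper bound to control the tail contribution of large values of $\varphi(X^\epsilon)$.

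For the lower bound, which is the easier direction, I would fix any $x_0\in\X$ with $\varphi(x_0)>-\infty$ and any $\delta>0$. Continuity of $\varphi$ yields an open neighborhood $G$ of $x_0$ on which $\varphi\ge\varphi(x_0)-\delta$, so
\[
\E\!\left[\exp\!\left(\tfrac{\varphi(X^\epsilon)}{\epsilon}\right)\right] \ge \exp\!\left(\tfrac{\varphi(x_0)-\delta}{\epsilon}\right)\mu_\epsilon(G),
\]
and applying the LDP lower bound to the open set $G$ gives
\[
\liminf_{\epsilon\to 0}\epsilon\log\E\!\left[\exp\!\left(\tfrac{\varphi(X^\epsilon)}{\epsilon}\right)\right]\ge \varphi(x_0)-\delta-\Lambda^*(x_0).
\]
Taking the supremum over $x_0$ and letting $\delta\to 0$ concludes this direction.

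For the upper bound, I would fix $L,M>0$ and $\delta>0$ and decompose the expectation according to the value of $\varphi(X^\epsilon)$ into three parts: on $\{\varphi(X^\epsilon)\le -M\}$, on $\{-M<\varphi(X^\epsilon)\le L\}$, and on $\{\varphi(X^\epsilon)>L\}$. The first part is bounded crudely by $e^{-M/\epsilon}$. The middle part is handled by partitioning $[-M,L]$ into subintervals of length at most $\delta$ and applying the LDP upper bound to the closed sets $\{x:\varphi(x)\in[a_k,a_{k+1}]\}$, which are closed by continuity of $\varphi$; summing the finitely many contributions yields a logarithmic bound of $\delta+\sup_{x\in\X}\{\varphi(x)-\Lambda^*(x)\}$ in the limit. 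For the tail piece, Hölder's inequality with exponent $\gamma>1$ gives
\[
\E\!\left[e^{\varphi(X^\epsilon)/\epsilon}\mathbf{1}_{\{\varphi(X^\epsilon)>L\}}\right] \le \E\!\left[e^{\gamma\varphi(X^\epsilon)/\epsilon}\right]^{1/\gamma}\mu_\epsilon\!\left(\{\varphi\ge L\}\right)^{(\gamma-1)/\gamma},
\]
and one controls the first factor by the moment hypothesis and the second by the LDP upper bound on the closed set $\{\varphi\ge L\}$. Sending $M\to\infty$ and $\delta\to 0$ shows the middle term dominates, and finally $L\to\infty$ eliminates the tail.

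The main obstacle is exactly this tail control: the hypothesis $\gamma>1$ is indispensable, because with $\gamma=1$ Hölder degenerates and there is no way to prevent rare trajectories with very large $\varphi$-values from dominating the expectation. A subsidiary but necessary point is that $\inf\{\Lambda^*(x):\varphi(x)\ge L\}\to\infty$ as $L\to\infty$; this follows from $\Lambda^*$ being a good rate function, since its level sets are compact and $\varphi$, being continuous on such compacts, is bounded above there, so no $x$ with $\varphi(x)\ge L$ can lie in a fixed level set once $L$ is large enough.
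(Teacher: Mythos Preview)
The paper does not prove this theorem; it is stated in Section~3 as a classical result with a pointer to \cite{De:Ze} and \cite{Gu:Ro2008}. Your outline is the standard textbook argument and is correct in its architecture: the lower bound via a single open neighborhood, the upper bound via a three-way decomposition with the middle piece discretized into finitely many closed level slabs.

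There is one soft spot in your tail step. You justify $\inf\{\Lambda^*(x):\varphi(x)\ge L\}\to\infty$ by invoking compactness of the level sets of $\Lambda^*$, i.e.\ goodness of the rate function. The theorem as stated says only ``rate function'', not ``good rate function'', so that appeal is not licensed by the hypotheses. The usual (and simpler) route bypasses the LDP altogether for the tail: on $\{\varphi>L\}$ one has the pointwise inequality $e^{\varphi/\epsilon}\le e^{-(\gamma-1)L/\epsilon}\,e^{\gamma\varphi/\epsilon}$, whence
\[
\limsup_{\epsilon\to0}\epsilon\log\E\!\left[e^{\varphi(X^\epsilon)/\epsilon}\mathbf{1}_{\{\varphi(X^\epsilon)>L\}}\right]\le -(\gamma-1)L+\limsup_{\epsilon\to0}\epsilon\log\E\!\left[e^{\gamma\varphi(X^\epsilon)/\epsilon}\right]\xrightarrow[L\to\infty]{}-\infty,
\]
using only the moment hypothesis. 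If you prefer to keep your H\"older-plus-LDP structure, you can still close the gap without goodness: the lower bound (which you have already proved) applied to $\gamma\varphi$ together with the moment hypothesis yields $\sup_{x}\{\gamma\varphi(x)-\Lambda^*(x)\}<\infty$, hence $\Lambda^*(x)\ge\gamma\varphi(x)-C$ and in particular $\inf\{\Lambda^*:\varphi\ge L\}\ge\gamma L-C\to\infty$. Either fix makes the argument complete.
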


\section{Trajectorial large deviations for affine\texorpdfstring{\newline}{} stochastic volatility model} \label{sec:LDP-Affine}

In this section, we prove a trajectorial LDP for $(X_t)$ when the time horizon is large. Define, for $\epsilon \in (0,1]$ and $0 \le t \le T$, the scaling $X_t^\epsilon=\epsilon X_{t/\epsilon}$. We proceed by proving first a LDP for $X_t^\epsilon$ in finite dimension, that we extend, in a second step to the whole trajectory of $(X_t^\epsilon)_{0 \le t \le T}$. 

\subsection{Finite-dimensional LDP} 

Let $\tau=\{0<t_1 < ... < t_n = t\}$, by convention $t_0=0$, and define
\[
\Lambda_{\epsilon,\tau}(\theta) = \log\E\left[e^{\sum_{k=1}^{n} \theta_k X_{t_k}^\epsilon}\right] \:,
\]
for $\theta \in \R^n$. We start by formulating our main technical assumption. 
\begin{assumption}\label{ass:H2}
One of the following conditions is verified.
\begin{enumerate}
\item\label{ass:H2a} The interval support of $F$ is $J=[u_-,u_+]$ and $w(u_-) = w(u_+)$. 
\item\label{ass:H2b} For every $u \in \R$, $\tilde{w}(\cdot) = \infty$, i.e, the generalized Riccati equations have only one (stable) equilibrium. 
\end{enumerate}
\end{assumption}
The following Lemma gives an intuition on Assumption \ref{ass:H2}. 
\begin{lemma}\label{lem:TildeW}
For every $u_1,u_2 \in I$, $\tilde{w}(u_1) \ge w(u_2)$. 
\end{lemma}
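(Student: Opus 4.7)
The plan is to treat the two cases of Assumption \ref{ass:H2} separately. Under case \ref{ass:H2b}, where $\tilde{w}(\cdot) \equiv +\infty$, the inequality is immediate.

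Under case \ref{ass:H2a}, set $w_\star := w(u_-) = w(u_+)$. The crux of the argument is to show that $u \mapsto w(u)$ is convex and $u \mapsto \tilde{w}(u)$ is concave on $I$. This follows from the convexity of $R$ (Lemma \ref{lem:RConvex}(a)), since for each $u \in I$ the vertical section $\{w : R(u,w) \le 0\}$ is exactly $[w(u), \tilde{w}(u)]$, i.e.\ $w(u)$ and $\tilde{w}(u)$ are the lower and upper endpoints of the vertical slice of the convex sublevel set $\{R \le 0\} \subseteq \R^2$. A direct convex-combination argument applied to the points $(u_i, w(u_i))$, respectively $(u_i, \tilde{w}(u_i))$, which all lie on $\{R = 0\}$, delivers the two monotonicity claims: the combination lies in $\{R \le 0\}$, hence is sandwiched between $w$ and $\tilde{w}$ at the intermediate point.

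With the convexity of $w$ in hand, the boundary condition $w(u_-) = w(u_+) = w_\star$ forces $w(u) \le w_\star$ on $[u_-, u_+]$. Similarly, since $\tilde{w} \ge w$ pointwise, we have $\tilde{w}(u_\pm) \ge w_\star$, and concavity of $\tilde{w}$ then gives $\tilde{w}(u) \ge \min\{\tilde{w}(u_-), \tilde{w}(u_+)\} \ge w_\star$ on $[u_-, u_+]$. Chaining these two bounds yields $\tilde{w}(u_1) \ge w_\star \ge w(u_2)$ for any $u_1, u_2 \in I$.

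I expect the main conceptual hurdle to be the convex-analytic identification of the equilibrium branches $w$ and $\tilde{w}$ with the lower and upper boundaries of $\{R \le 0\}$, which is what translates the $\R^2$-convexity of $R$ from Lemma \ref{lem:RConvex} into a one-dimensional convex/concave statement about the two branches; once that is established, the rest of the argument is a short chase in one-variable convex analysis. A minor subtlety is the implicit identification of $I$ with the support $J = [u_-, u_+]$ in Assumption \ref{ass:H2a}, which should follow from the saddle-node structure of the Riccati dynamics at the endpoints of $J$.
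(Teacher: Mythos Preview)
Your proposal is correct and follows essentially the same route as the paper: split on the two cases of Assumption~\ref{ass:H2}, and in case~\ref{ass:H2a} use convexity of $u\mapsto w(u)$ and concavity of $u\mapsto\tilde w(u)$ together with the endpoint condition $w(u_-)=w(u_+)$ to sandwich both branches by $w_\star$. You add two things the paper omits: an explicit argument for why the branches are convex/concave (via the sublevel set $\{R\le 0\}$ and Lemma~\ref{lem:RConvex}), and a slightly more robust handling of the $\tilde w$ bound, using only $\tilde w(u_\pm)\ge w(u_\pm)$ rather than equality at the endpoints---the paper's displayed equality there tacitly assumes the branches coalesce at $u_\pm$. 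Your flagged subtlety about $I$ versus $J=[u_-,u_+]$ is real and present in the paper's argument as well; the convex-combination bounds need $u_1,u_2\in[u_-,u_+]$.
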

\begin{proof}
If Assumption \ref{ass:H2}(\ref{ass:H2b}) holds, then the result is obvious. Assume then that it is Assumption \ref{ass:H2}(\ref{ass:H2a}), that holds. Since $u \mapsto w(u)$ is convex and $u \mapsto\tilde{w}(u)$ is concave, then for every $u_1,u_2 \in I$,
\[
\tilde{w}(u_1) \ge \frac{u_+-u_1}{u_+-u_-} \tilde{w}(u_-) + \frac{u_1-u_-}{u_+-u_-} \tilde{w}(u_+)  = w(u_-) \:,
\]
while 
\[
w(u_2) \le \frac{u_+-u_2}{u_+-u_-} \tilde{w}(u_-) + \frac{u_2-u_-}{u_+-u_-} \tilde{w}(u_+)  = w(u_-) \:.
\]
Therefore $\tilde{w}(u_1) \ge w(u_2)$ for every $u_1,u_2 \in I$. 
\end{proof}
As a first step to apply Theorem \ref{thm:GartnerEllis}, we prove the following result.
\begin{theorem}\label{thm:LambdaTau}
Let $\theta \in \R^n$. If Assumption \ref{ass:H2} holds, then 
\[
\Lambda_{\tau}(\theta)
:= \lim_{\epsilon \rightarrow 0} \epsilon \,\Lambda_{\epsilon,\tau}(\theta/\epsilon) = 
\begin{cases}
\sum_{j=1}^n (t_{j}-t_{j-1}) \, h\left(\Theta_j\right) \quad & \text{ if } \Theta_j \in J \:, \: \forall j \\
\infty & \text{ otherwise}  
\end{cases}\:,
\]
where $\Theta_j := \sum_{k=j}^n \theta_k$.
\end{theorem}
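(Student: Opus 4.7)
The plan is to reduce $\Lambda_{\epsilon,\tau}(\theta/\epsilon)$ to an iterated application of the affine identity \eqref{eq:LaplaceTransform} and then pass to the limit using Theorem \ref{thm:KellerResselProperties}. Setting $s_k = t_k/\epsilon$ and conditioning successively on $\mathcal{F}_{s_{n-1}},\ldots,\mathcal{F}_0$, the formula at each step pulls out a $\phi$-factor, updates the coefficient of $V$ via $\psi$, and absorbs $\theta_k$ into the coefficient of $X$, producing the partial sums $\Theta_k$. This should give the representation
\[
\Lambda_{\epsilon,\tau}(\theta/\epsilon) = \sum_{k=1}^n \phi\!\left(\tfrac{t_k-t_{k-1}}{\epsilon},\,\Theta_k,\,w_k^\epsilon\right) + w_0^\epsilon\, V_0 + \Theta_1\, X_0,
\]
where the coefficients satisfy the backward recursion $w_n^\epsilon := 0$ and $w_{k-1}^\epsilon := \psi\!\left(\tfrac{t_k-t_{k-1}}{\epsilon},\,\Theta_k,\,w_k^\epsilon\right)$ for $k = n, n-1, \ldots, 1$.

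Next, assuming $\Theta_j \in J$ for every $j$, I intend to prove by reverse induction on $k$ that $w_k^\epsilon \to w(\Theta_{k+1})$ as $\epsilon \to 0$. The base case $k = n-1$ is immediate from \eqref{eq:ConvergencePsi}, since $0 \in \mathcal{B}(\Theta_n)$ by Theorem \ref{thm:KellerResselProperties}. For the inductive step, Lemma \ref{lem:TildeW} gives $\tilde{w}(\Theta_k) \ge w(\Theta_{k+1})$; under Assumption \ref{ass:H2}(\ref{ass:H2b}) this is trivially strict ($\tilde{w} = \infty$), and under Assumption \ref{ass:H2}(\ref{ass:H2a}) the strict concavity of $\tilde{w}$ on the interior of $J$ yields strict inequality away from the endpoints, so that $w_k^\epsilon \in \mathcal{B}(\Theta_k)$ for small $\epsilon$ and $w_{k-1}^\epsilon \to w(\Theta_k)$ via \eqref{eq:ConvergencePsi}. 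Multiplying by $\epsilon$, the boundary terms $\epsilon(w_0^\epsilon V_0 + \Theta_1 X_0)$ vanish because $w_0^\epsilon$ admits a finite limit, and \eqref{eq:ConvergencePhi} gives
\[
\epsilon\,\phi\!\left(\tfrac{t_k-t_{k-1}}{\epsilon},\,\Theta_k,\,w_k^\epsilon\right) \longrightarrow (t_k-t_{k-1})\,h(\Theta_k),
\]
so that summing over $k$ produces the announced finite value.

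If instead $\Theta_j \notin J$ for some $j$, then either $\Theta_j \notin I$ (in which case the recursion breaks down because no equilibrium exists) or $h(\Theta_j) = F(\Theta_j, w(\Theta_j)) = +\infty$. A convexity/monotonicity argument based on Lemma \ref{lem:RConvex} should show that the $\phi$-term at the corresponding index in the representation grows strictly faster than $1/\epsilon$ while the remaining terms stay controlled from below, yielding $\Lambda_\tau(\theta) = +\infty$. The hard part will be the inductive step of the finite case: ensuring $w_k^\epsilon$ lies in $\mathcal{B}(\Theta_k)$ despite Lemma \ref{lem:TildeW} producing only a non-strict inequality. Assumption \ref{ass:H2} is tailored precisely to handle this, but the boundary sub-case $\tilde{w}(\Theta_k) = w(\Theta_{k+1})$ under Assumption \ref{ass:H2}(\ref{ass:H2a}) will require tracking the side from which the Riccati flow approaches its stable equilibrium.
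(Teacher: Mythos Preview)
Your approach is essentially the paper's: iterate the affine identity \eqref{eq:LaplaceTransform} via the tower property, obtaining a sum of $\phi$-terms linked by a backward $\psi$-recursion, and pass to the limit using Theorem \ref{thm:KellerResselProperties}. The paper does not write out the sequence $w_k^\epsilon$ explicitly but instead peels off one conditioning at a time, taking the limit of the outermost $\phi$-term at each step; the content is the same.

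Two remarks. First, for the basin-of-attraction membership the paper simply asserts $w(\Theta_{j+1}) \in \mathcal{B}(\Theta_j)$ in one line from Lemma \ref{lem:TildeW}, without discussing the strict-inequality issue you flag; your concern is legitimate, but the paper does not engage with it either, so you are being more careful than the original, not less. Second, for the divergent case the paper's argument is sharper than your sketch: take the \emph{largest} $k$ with $\Theta_k \notin J$, run the finite-case iteration for indices $j>k$ (which are all in $J$ by choice of $k$), and then note that the remaining term $\epsilon\,\phi\bigl((t_k-t_{k-1})/\epsilon,\,\Theta_k,\,w(\Theta_{k+1})\bigr)$ diverges because $\phi(\cdot,u,w)$ blows up when $u \notin J$. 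This avoids the vague ``convexity/monotonicity'' argument and the case split $\Theta_j \notin I$ versus $\Theta_j \in I\setminus J$.
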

\begin{proof}
Since Assumption \ref{ass:H2} holds, then, by Lemma \ref{lem:TildeW}, $w(\Theta_{j+1}) \in \B(\Theta_j)$ for every $j$. Assume first that $\Theta_j \in J$ for every $j$. Using the Markov property and eq. \eqref{eq:LaplaceTransform}, we obtain
\begin{align*}
\Lambda_\tau(\theta) 
& = \lim_{\epsilon \rightarrow 0} \epsilon \log \left(\E\left[e^{\sum_{j=1}^n \theta_j X_{t_j/\epsilon}}\right]\right) \\
& = \lim_{\epsilon \rightarrow 0} \epsilon \log \left(\E\left[e^{\sum_{j=1}^{n-1}\theta_j X_{t_j/\epsilon}}\, \E\left(e^{\Theta_n X_{t_n/\epsilon}}\middle| X_{t_{n-1}/\epsilon},V_{t_{n-1}/\epsilon}\right)\right]\right) \\
& = \lim_{\epsilon \rightarrow 0} \epsilon \, \phi\left(\frac{t_{n}-t_{n-1}}{\epsilon},\, \Theta_n,\, 0 \right) \\
& \qquad + \epsilon \log \left(\E\left[e^{\sum_{j=1}^{n-2} \theta_j X_{t_j/\epsilon} + \Theta_{n-1} X_{t_{n-1}/\epsilon} + \psi\left(\frac{t_{n}-t_{n-1}}{\epsilon},\, \Theta_n,\, 0 \right)V_{t_{n-1}/\epsilon}}\right]\right) \:.
\end{align*}
Since $\Theta_n \in J$ and $0 \in \B(\Theta_n)$, eqs. \eqref{eq:ConvergencePsi} and \eqref{eq:ConvergencePhi} apply and 
\begin{align*}
\Lambda_\tau(\theta) 
& = \lim_{\epsilon \rightarrow 0} \epsilon \log \left(\E\left[e^{\sum_{j=1}^{n-2} \theta_j X_{t_j/\epsilon} + \Theta_{n-1} X_{t_{n-1}/\epsilon} + \psi\left(\frac{t_{n}-t_{n-1}}{\epsilon},\, \Theta_n,\, 0 \right)V_{t_{n-1}/\epsilon}}\right]\right) \\
& \qquad + (t_{n}-t_{n-1})\, h(\Theta_n) \:.
\end{align*} 
Using the fact that $\Theta_j \in J$ and $w(\Theta_{j+1}) \in \B(\Theta_j)$ for every $j$, we can iterate the procedure to obtain
\begin{align}
\Lambda_\tau(\theta) 
& = \sum_{j=1}^n (t_{j}-t_{j-1}) \, h\left(\Theta_j\right) + \lim_{\epsilon \rightarrow 0} \epsilon \,\psi\left(\frac{t_{1}-t_{0}}{\epsilon},\, \Theta_1,\, w\left(\Theta_2 \right) \right)V_{0} + \epsilon \sum_{k=1}^n \theta_k X_0 \notag\\
& = \sum_{j=1}^n (t_{j}-t_{j-1}) \, h\left(\Theta_j\right) \label{eq:LambdaN}\:.
\end{align}

Assume now that there exists $k$ such that $\Theta_k \not\in J$. Without loss of generality, we take the largest such $k$. Following the same procedure, we find
\begin{align*}
\Lambda_\tau(\theta) 
& = \lim_{\epsilon \rightarrow 0} \epsilon \log \left(\E\left[e^{\sum_{j=1}^{k-2} \theta_j X_{t_j/\epsilon} + \Theta_{k-1} \, X_{t_{k-1}/\epsilon} + \psi\left(\frac{t_{k}-t_{k-1}}{\epsilon},\Theta_{k}, w(\Theta_{k+1})\right) \, V_{t_{k-1}/\epsilon}}\right]\right) \\
& \qquad + \epsilon \,\phi\left(\frac{t_{k}-t_{k-1}}{\epsilon},\Theta_{k}, w(\Theta_{k+1}) \right) + \sum_{j=k+1}^n (t_{j}-t_{j-1})\, h(\Theta_j) \:.
\end{align*} 
Noting that $\phi(\cdot, u, w)$ explodes in finite time for $u \not\in J$ then finishes the proof.
\end{proof}
We now proceed to the finite-dimensional large deviations result.
\begin{theorem}\label{thm:LDPFiniteDimensional}
Let $(X_t^\epsilon)_{t \ge 0,\: \epsilon \in (0,1]}$ and $\tau=\{t_1,...,t_n\}$ as previously. Assuming that Assumption \ref{ass:H2} holds, then $(X_{t_1}^\epsilon,...,X_{t_n}^\epsilon)$ satisfies a LDP on $\R^n$ with good rate function 
\[
\Lambda_\tau^*(x) = \sup_{\Theta \in J^n} \left\{ \sum_{j=1}^n \Theta_j (x_j-x_{j-1}) - \sum_{j=1}^n (t_{j}-t_{j-1}) \, h\left(\Theta_j\right) \right\} \:,
\]
where $\Theta_j = \sum_{k = j}^n \theta_k$.
\end{theorem}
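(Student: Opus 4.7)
My plan is to apply the Gärtner--Ellis theorem (Theorem \ref{thm:GartnerEllis}) to the random vector $(X^\epsilon_{t_1},\ldots,X^\epsilon_{t_n})$ with speed $\epsilon$. Theorem \ref{thm:LambdaTau} already identifies the pointwise limit of the rescaled log-moment generating function as $\Lambda_\tau(\theta)$, so what remains is to verify the hypotheses of Gärtner--Ellis and then compute the Fenchel--Legendre transform.

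The key observation is that the linear map $T:\theta\mapsto\Theta$, defined by $\Theta_j=\sum_{k=j}^n\theta_k$, is a bijection on $\mathbb R^n$ (its inverse is $\theta_j=\Theta_j-\Theta_{j+1}$ with the convention $\Theta_{n+1}=0$). Under this change of variables,
\[
\Lambda_\tau(\theta)=\sum_{j=1}^n (t_j-t_{j-1})\,h(\Theta_j),
\]
which is a separable sum in $\Theta$. Since $\{0,1\}\subset J^\circ$ under Assumption \ref{ass:H}, the origin $\Theta=0$ (equivalently $\theta=0$) belongs to $D_{\Lambda_\tau}^\circ$. Convexity and lower semicontinuity of $\Lambda_\tau$ follow from the corresponding properties of $h$ (which is convex as the Fenchel conjugate of the limiting cumulant of $X_t$, and lower semicontinuous because it is the supremum limit of convex log-MGFs) together with the linearity of $T$. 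Essential smoothness transfers from $h$ to the separable sum $\sum_j(t_j-t_{j-1})h(\Theta_j)$ in the $\Theta$ coordinates (coordinatewise differentiability and steepness on the product domain $J^n$), and then transfers back to $\theta$ coordinates because $T$ and $T^{-1}$ are linear isomorphisms preserving interiors, differentiability, and blow-up of the gradient at the boundary.

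Once the Gärtner--Ellis hypotheses are checked, part (c) of Theorem \ref{thm:GartnerEllis} yields a full LDP with good rate function
\[
\Lambda_\tau^*(x)=\sup_{\theta\in\mathbb R^n}\left\{\sum_{j=1}^n\theta_j x_j-\Lambda_\tau(\theta)\right\}.
\]
To put this in the stated form, I rewrite the linear term via Abel summation: with the convention $x_0=0$ and $\Theta_{n+1}=0$,
\[
\sum_{j=1}^n\theta_j x_j=\sum_{j=1}^n(\Theta_j-\Theta_{j+1})x_j=\sum_{j=1}^n\Theta_j(x_j-x_{j-1}).
\]
Substituting and restricting the supremum to $J^n$ (outside of which $\Lambda_\tau=+\infty$) gives exactly
\[
\Lambda_\tau^*(x)=\sup_{\Theta\in J^n}\left\{\sum_{j=1}^n\Theta_j(x_j-x_{j-1})-\sum_{j=1}^n(t_j-t_{j-1})h(\Theta_j)\right\}.
\]

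The main obstacle I anticipate is the clean verification of essential smoothness of $\Lambda_\tau$ on $\mathbb R^n$: one must check steepness at boundary points of $D_{\Lambda_\tau}$ when only some coordinates $\Theta_j$ approach $\partial J$, which is where the separable structure in the $\Theta$ coordinates combined with Assumption \ref{ass:H}(\ref{ass:Hb}) (steepness of $h$) does the work — the gradient of $\Lambda_\tau$ in $\theta$ involves sums of $h'(\Theta_j)$'s, and invertibility of $T$ ensures that the blow-up of a single $h'(\Theta_j)$ propagates to blow-up of $\|\nabla_\theta\Lambda_\tau\|$. The rest is routine manipulation.
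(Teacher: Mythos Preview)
Your proposal is correct and follows essentially the same route as the paper's proof: invoke Theorem~\ref{thm:LambdaTau} for the limiting cumulant, check that $0\in D_{\Lambda_\tau}^\circ$ via Assumption~\ref{ass:H}(\ref{ass:Ha}), transfer essential smoothness and lower semicontinuity from $h$ to $\Lambda_\tau$ via Assumption~\ref{ass:H}(\ref{ass:Hb}), apply G\"artner--Ellis, and then rewrite the Fenchel--Legendre transform in the $\Theta$ variables by Abel summation. Your treatment of the essential smoothness transfer (via the separable structure in $\Theta$ and the linear isomorphism $T$) is in fact more explicit than the paper's, which simply asserts the implication in one line.
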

\begin{proof}
By Assumption \ref{ass:H}(\ref{ass:Ha}), there exists $u \in J$ such that $u < 0$,  which implies that $[u,1] \subset J$ and therefore 0 is in the interior of $D_{\Lambda_\tau} = J^n$. Theorem \ref{thm:LambdaTau} implies that the limit 
\[
\Lambda_{\tau}(\theta)
= \lim_{\epsilon \rightarrow 0} \epsilon \,\Lambda_{\epsilon,\tau}(\theta/\epsilon) = 
\begin{cases}
\sum_{j=1}^n (t_{j}-t_{j-1}) \, h\left(\Theta_j\right) \quad & \text{ if } \Theta_j \in J \:, \: \forall j \\
\infty & \text{ otherwise}  
\end{cases}\:,
\]
where $\Theta_j := \sum_{k=j}^n \theta_k$, exists as an extended real number. Since, by Assumption \ref{ass:H}(\ref{ass:Hb}), $h$ is essentially smooth and lower semi-continuous, then so is $\Lambda_\tau$. Theorem \ref{thm:GartnerEllis} then applies and $(X_{t_1}^\epsilon,...,X_{t_n}^\epsilon)$ satisfies a LDP, on $\R^n$, with good rate function 
\[
\Lambda_\tau^*(x) = \sup_{\theta\in\R^n} \left\{\theta^\top x - \Lambda_\tau(\theta)\right\} \:.
\]
Furthermore,
\begin{align*}
\Lambda_\tau^*(x)
& = \sup_{\theta\in\R^n} \left\{\theta^\top x - \Lambda_\tau(\theta)\right\} \\
& = \sup_{\Theta \in J^n} \left\{ \sum_{j=1}^n \sum_{k=j}^n \theta_k (x_j-x_{j-1}) - \sum_{j=1}^n (t_{j}-t_{j-1}) \, h\left(\Theta_j\right) \right\} \\
& = \sup_{\Theta \in J^n} \left\{ \sum_{j=1}^n \Theta_j (x_j-x_{j-1}) - \sum_{j=1}^n (t_{j}-t_{j-1}) \, h\left(\Theta_j\right) \right\} \:,
\end{align*}
which finishes the proof.
\end{proof}

\subsection{Infinite-dimensional LDP}

\subsubsection{Extension of the LDP} 

We now extend the LDP to the whole trajectory of $(X_t^\epsilon)_{0 \le t \le T}$ on $\F([0,T], \: \R)$ $:= \{x : [0,T] \rightarrow \R, \: x_0 = 0 \}$, the set of all functions from $[0,T]$ to $\R$ that vanish at 0, by proving the following general lemma. 
\begin{lemma}\label{lem:DawsonGartner}
Assume that for any $\tau = \{t_1,...,t_n\}$, the finite-dimensional process $(X_{t_1}^\epsilon,...,X_{t_n}^\epsilon)$ satisfies a large deviation property with good rate function $\Lambda^*_\tau$. Then the family $(X_t^\epsilon)_{0\le t \le T}$ satisfies a large deviation property on $\X= \F([0,T], \: \R)$ equipped with the topology of pointwise convergence, with good rate function
\[
\Lambda^*(x) = \sup_{j \in\Pc} \Lambda^*_{j}(p_j(x))\:.
\]
\end{lemma}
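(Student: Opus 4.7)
The plan is to invoke the Dawson-Gärtner projective limit theorem (Theorem \ref{thm:DawsonGartner}) after exhibiting the correct projective system whose limit is $\X=\F([0,T],\R)$ equipped with the topology of pointwise convergence. For the index set I would take $\Pc$ to be the collection of finite subsets of $(0,T]$ ordered by inclusion; this is visibly right-filtering since the union of two finite subsets is again finite and dominates both. To each $\tau=\{t_1,\ldots,t_n\}\in\Pc$ I would assign $\Y_\tau=\R^n$ with its usual Euclidean topology, and to every inclusion $\tau\subseteq\tau'$ the natural coordinate-projection $p_{\tau\tau'}\colon\Y_{\tau'}\to\Y_\tau$; continuity and the compatibility $p_{\tau\tau''}=p_{\tau\tau'}\circ p_{\tau'\tau''}$ are immediate from the definition.

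The second step is the identification $\underset{\longleftarrow}{\lim}\Y_\tau\cong\X$. Any $x\in\X$ produces a compatible family $y_\tau=(x_{t_1},\ldots,x_{t_n})$, and conversely any compatible family $(y_\tau)_{\tau\in\Pc}$ determines $x$ uniquely via $x_t=y_{\{t\}}$ for $t\in(0,T]$, together with the constraint $x_0=0$ that is built into $\F([0,T],\R)$. Under this bijection the subspace topology inherited from the product $\prod_{\tau}\Y_\tau$ is precisely pointwise convergence on $\X$, and the canonical projection $p_\tau$ becomes the evaluation map $x\mapsto(x_{t_1},\ldots,x_{t_n})$.

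Once this framework is in place, the conclusion follows directly: the pushforward $\mu_\epsilon\circ p_\tau^{-1}$ is the law of $(X_{t_1}^\epsilon,\ldots,X_{t_n}^\epsilon)$, which by hypothesis satisfies an LDP on $\Y_\tau$ with good rate function $\Lambda^*_\tau$; Theorem \ref{thm:DawsonGartner} then delivers an LDP on $\X$ with good rate function $\Lambda^*(x)=\sup_{\tau\in\Pc}\Lambda^*_\tau(p_\tau(x))$, exactly as claimed. The only point that requires a careful though essentially routine verification is the topological identification of the projective limit with $\F([0,T],\R)$; no substantial obstacle arises because we have explicitly restricted to functions vanishing at $0$, and the projective limit of the $\R^n$'s over all finite subsets of $(0,T]$ is by construction the space of all real-valued functions on $(0,T]$ with the product topology.
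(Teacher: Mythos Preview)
Your proposal is correct and follows essentially the same approach as the paper: both set up the projective system of finite-dimensional evaluations over finite index sets ordered by inclusion and then invoke the Dawson--G\"artner theorem. If anything, you are slightly more explicit than the paper in verifying that the projective limit of the $\R^n$'s is (homeomorphic to) $\F([0,T],\R)$ with the pointwise topology and in handling the constraint $x_0=0$ by indexing over subsets of $(0,T]$.
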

\begin{proof}
Let $(\Pc,\le)$ be the partially ordered right-filtering set 
\[
\Pc= \bigcup_{n=1}^{\infty} \{(t_1,...,t_n)\: 0 \le t_1 \le ... \le t_n \le T\} 
\]
ordered by inclusion. We consider on $(\Pc,\le)$ the projective system \break$(\Y_j, p_{ij})_{i \le j \in \Pc}$ defined by $\Y_j = \R^{\#j}$ and $p_{ij}:\Y_j \rightarrow \Y_i$ the natural projection on shared times. The canonical projection from $\X$ to $\Y_\tau$ is $p_{\tau}(x) = (x_{t_1},...,x_{t_n})$. Let $\mu^\epsilon$ be the probability measure generated by $(X_t^\epsilon)_{0\le t \le T}$ on $\X$. Then, by hypothesis, for any $\tau \in \Pc$, $\mu^\epsilon \circ p_\tau^{-1}$ satisfies a LDP with good rate function $\Lambda^*_\tau$. The result is then given by Theorem \ref{thm:DawsonGartner}.
\end{proof}
\begin{theorem}\label{thm:LDPInfiniteDimensionalExistence}
Assume that Assumption \ref{ass:H2} holds, then $(X_t^\epsilon)_{0 \le t \le T}$ satisfies a LDP on $\F([0,T],\R)$ equipped with the topology of point-convergence, as $\epsilon \rightarrow 0$, with good rate function
\[
\Lambda^*(x) = \sup_\tau \Lambda_\tau^*(x)\:.
\]
\end{theorem}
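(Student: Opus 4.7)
The statement is designed to fall out directly from the two preceding results: the finite-dimensional large deviation principle (Theorem \ref{thm:LDPFiniteDimensional}) and the general projective-limit lemma (Lemma \ref{lem:DawsonGartner}). The plan is therefore simply to combine them and to verify that no additional hypothesis needs to be checked.

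First, I would fix Assumption \ref{ass:H2} and invoke Theorem \ref{thm:LDPFiniteDimensional} to obtain, for every finite partition $\tau = \{t_1, \ldots, t_n\} \subset [0,T]$, that the marginal vector $(X_{t_1}^{\epsilon}, \ldots, X_{t_n}^{\epsilon})$ satisfies a large deviation principle on $\R^n$ as $\epsilon \to 0$ with the good rate function $\Lambda_\tau^*$ given in that theorem. This is the single hypothesis required to apply Lemma \ref{lem:DawsonGartner}, so the bulk of the proof is already done.

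Next, I would apply Lemma \ref{lem:DawsonGartner} on the projective system $(\Y_\tau, p_{\sigma\tau})$ indexed by the right-filtering set $\Pc$ of finite time grids in $[0,T]$ ordered by inclusion, with $\Y_\tau = \R^{\#\tau}$ and canonical projections $p_\tau : \F([0,T],\R) \to \R^{\#\tau}$ defined by $p_\tau(x) = (x_{t_1},\ldots,x_{t_n})$. The lemma yields that the law of $(X_t^{\epsilon})_{0 \le t \le T}$ on $\F([0,T],\R)$ equipped with the pointwise-convergence topology satisfies a LDP with good rate function
\[
\Lambda^*(x) = \sup_{\tau \in \Pc} \Lambda_\tau^*(p_\tau(x)).
\]
Identifying $\Lambda_\tau^*(p_\tau(x))$ with $\Lambda_\tau^*(x)$ in the notation of the theorem statement (the finite-dimensional rate function depends only on the values of $x$ at the times in $\tau$) gives the claimed formula.

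There is essentially no obstacle here: the difficult analytic work has been carried out in Theorems \ref{thm:LambdaTau} and \ref{thm:LDPFiniteDimensional}, and the topological upgrade is handled abstractly by the Dawson--G\"artner machinery encoded in Lemma \ref{lem:DawsonGartner}. The only point worth verifying carefully is that the projective structure on $\Pc$ and the canonical projections match the setup of Lemma \ref{lem:DawsonGartner}, which is immediate. The price paid for this ease is the weakness of the topology: the resulting LDP is on $\F([0,T],\R)$ with the topology of pointwise convergence, as emphasized in the introduction.
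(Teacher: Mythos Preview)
Your proposal is correct and follows exactly the paper's approach: the paper's proof consists of the single sentence ``The result is a direct application of Lemma \ref{lem:DawsonGartner},'' and you have simply spelled out why that lemma applies, namely because Theorem \ref{thm:LDPFiniteDimensional} supplies the required finite-dimensional LDPs under Assumption \ref{ass:H2}.
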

\begin{proof}
The result is a direct application of Lemma \ref{lem:DawsonGartner}.
\end{proof}

\subsubsection{Calculation of the rate function}

We finally calculate the rate function of Theorem \ref{thm:LDPInfiniteDimensionalExistence}. 
\begin{theorem}\label{thm:LPDInfiniteDimensionalRateFunction}
The rate function of Theorem \ref{thm:LDPInfiniteDimensionalExistence} is 
\[
\Lambda^*(x) = \int_0^T h^*(\px{t}) \, dt + \int_0^T \H\left( \frac{d\nu_t}{d\theta_t} \right)\, d\theta_t \:,
\]
where
\[
h^*(y) = \sup_{\theta \in J} \left\{\theta y - h(\theta) \right\} \:, \qquad \H(y) = \lim_{\epsilon\rightarrow 0} \epsilon \, h^*(y/\epsilon) \:,
\]
$\px{\phantom{t}}$ is the derivative of the absolutely continuous part of $x$, $\nu_t$ is the singular component of $dx_t$ with respect to $dt$ and $\theta_t$ is any non-negative, finite, regular, $\R$-valued Borel measure, with respect to which $\nu_t$ is absolutely continuous. 
\end{theorem}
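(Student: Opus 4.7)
The plan is to unfold the formula $\Lambda^*(x)=\sup_\tau \Lambda_\tau^*(x)$ from Theorem \ref{thm:LDPInfiniteDimensionalExistence} as a limit of Riemann-like sums of $h^*$, and then identify it with the stated integral. The starting observation is that the finite-dimensional supremum defining $\Lambda_\tau^*$ separates across the components $\Theta_1,\dots,\Theta_n\in J$, since each summand depends on only one $\Theta_j$. This gives the simplification
\[
\Lambda_\tau^*(x) \;=\; \sum_{j=1}^n (t_j-t_{j-1})\, h^*\!\left(\frac{x_{t_j}-x_{t_{j-1}}}{t_j-t_{j-1}}\right),
\]
with the convention $x_{t_0}=x_0=0$, and hence
\[
\Lambda^*(x) \;=\; \sup_\tau \sum_{j=1}^n (t_j-t_{j-1})\, h^*\!\left(\frac{\Delta x_j}{\Delta t_j}\right).
\]

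Next I would record two structural properties. By Jensen's inequality applied to the convex function $h^*$, splitting one sub-interval of $\tau$ at an intermediate point only increases the corresponding summand; hence the quantity above is non-decreasing under refinement, and the supremum coincides with the limit along any increasing sequence of partitions whose union is dense in $[0,T]$. Second, using Assumption \ref{ass:H}(\ref{ass:Ha}) to pick some negative $u\in J$ together with the fact that $\{0,1\}\subset J^\circ$, the inequalities $h^*(y)\ge uy-h(u)$ for $u\in\{0,1\}$ and for this negative $u$ produce an affine lower bound $h^*(y)\ge c_1|y|-c_2$ for some constants $c_1>0$, $c_2\ge 0$. Consequently, $\Lambda^*(x)<\infty$ forces $\sup_\tau\sum_j|\Delta x_j|<\infty$, so $x$ is necessarily of bounded variation, and the Lebesgue decomposition of its signed Stieltjes measure gives $dx_t=\px{t}\,dt+d\nu_t$.

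Then I would identify the two contributions separately. For partitions whose mesh tends to zero, the Lebesgue differentiation theorem yields $\Delta x_j/\Delta t_j\to \px{t}$ for Lebesgue-a.e.\ $t$ lying in \emph{regular} cells (those carrying negligible singular mass); together with the affine lower bound on $h^*$ and its continuity on the interior of its domain, this passes to the limit and produces $\int_0^T h^*(\px{t})\,dt$. On the cells where $\nu$ concentrates mass, $\Delta x_j/\Delta t_j$ is large, and the perspective scaling $\Delta t_j\,h^*(\Delta x_j/\Delta t_j)\sim \Delta\theta_j\,\H(\Delta\nu_j/\Delta\theta_j)$ is precisely what defines the recession function: with any non-negative finite Borel $\theta\gg|\nu|$ and partitions adapted to $\theta$, this part converges to $\int_0^T \H(d\nu_t/d\theta_t)\,d\theta_t$, and positive $1$-homogeneity of $\H$ makes the answer independent of the choice of $\theta$. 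Matching upper and lower bounds via partitions that isolate the support of $\nu$ from its complement, and invoking the refinement monotonicity to combine the two limits, then yields the claimed formula.

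The main obstacle is the rigorous treatment of the singular component: one needs a Rockafellar-type representation theorem for the supremum, over partitions, of integral functionals of the perspective form $(t,y)\mapsto t\,h^*(y/t)$ on BV paths (in the spirit of \cite{Leo2000} and the classical theory of convex integral functionals), which guarantees that $\H$ captures the singular contribution correctly and independently of the dominating measure $\theta$; verifying its hypotheses for our $h^*$ -- in particular the essential smoothness and lower semicontinuity inherited from $h$ -- is the technical heart of the argument.
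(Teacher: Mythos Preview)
Your approach is correct in outline but follows a genuinely different route from the paper's. You first collapse the inner supremum over $\Theta\in J^n$ to obtain the primal formula
\[
\Lambda_\tau^*(x)=\sum_{j=1}^n (t_j-t_{j-1})\,h^*\!\left(\frac{x_{t_j}-x_{t_{j-1}}}{t_j-t_{j-1}}\right),
\]
and then aim to take a direct limit of these perspective-type Riemann sums, exploiting refinement monotonicity (via Jensen for the convex $h^*$) and an affine lower bound on $h^*$ to force bounded variation. The paper instead keeps the dual variable throughout: it rewrites $\sup_\tau\Lambda_\tau^*(x)$ as
\[
\sup_{\theta\in C([0,T],J)}\int_0^T\theta_t\,dx_t-\int_0^T h(\theta_t)\,dt
\]
by swapping the order of the two suprema and showing that step functions can be replaced by continuous ones (via an explicit piecewise-linear mollification $\theta^{\epsilon,\tau}$), and then invokes \cite[Thm.~5]{Roc1971} on this conjugate functional directly.

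Both arguments ultimately lean on Rockafellar's theory of convex integral functionals; the difference is where the conjugation is performed. Your primal route yields a concrete intermediate object and the pleasant monotonicity-under-refinement observation, but the passage to the limit for the singular part is where the real analysis sits, and you rightly flag that as the obstacle. The paper's dual route bypasses that difficulty by staying on the $h$ side until the very end, so that Rockafellar's theorem does all the measure-theoretic work in one stroke; the price is the somewhat ad hoc argument that the supremum over step functions equals the supremum over continuous ones.
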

\begin{proof}
By identifying $(\Theta_1,...,\Theta_n)$ with $(\theta_{t_1},...,\theta_{t_n})$, we find for every $x \in \F([0,T],\R)$,
\begin{align*}
\sup_\tau \Lambda_\tau^*(x) & = \sup_\tau \sup_{\Theta \in J^{\# \tau}} \sum_{j=1}^{\# \tau} \Theta_j (x_{t_j}-x_{t_{j-1}}) - (t_j-t_{j-1})h(\Theta_j) \\
& =  \sup_{\theta \in \F([0,T],\R)} \sup_\tau \sum_{j=1}^{\# \tau} \theta_{t_j} (x_{t_j}-x_{t_{j-1}}) - (t_j-t_{j-1})h(\theta_{t_j}) \\
& = \sup_{\theta \in C([0,T],J)} \sup_\tau \sum_{j=1}^{\# \tau} \theta_{t_j} (x_{t_j}-x_{t_{j-1}}) - (t_j-t_{j-1})h(\theta_{t_j}) \:.
\end{align*}
Note that the supremum can be taken indifferently on $\F([0,T],J)$ or on $C([0,T],J)$ because the objective function depends on $\theta$ only on a finite set. Since we have assumed that there exists $u<0$ in $J$, then if $x$ has infinite variation, we immediately find that $\Lambda^*(x) = \infty$. Assume therefore that $x$ has finite variation. We wish to show that 
\begin{align*}
\sup_{\theta \in C([0,T],J)} \sup_\tau \sum_{j=1}^{\# \tau} \theta_{t_j} & (x_{t_j}-x_{t_{j-1}}) - (t_j-t_{j-1})h(\theta_{t_j}) \\
& = \hspace*{-.2cm} \sup_{\theta \in C([0,T],J)} \int_0^T \theta_{t} dx_t - \int_0^T h(\theta_{t}) dt \:.
\end{align*}
Notice that
\begin{align*}
& \sup_{\theta \in C([0,T],J)} \!\sup_\tau \sum_{j=1}^{\# \tau} \theta_{t_j} (x_{t_j}-x_{t_{j-1}}) - (t_j-t_{j-1})h(\theta_{t_j}) \\
\ge \: & \sup_{\theta \in C([0,T],J)} \!\limsup_\tau \sum_{j=1}^{\# \tau} \theta_{t_j} (x_{t_j}-x_{t_{j-1}}) - (t_j-t_{j-1})h(\theta_{t_j}) \\
= \: & \sup_{\theta \in C([0,T],J)} \int_0^T \theta_{t} dx_t - \int_0^T h(\theta_{t}) dt \:.
\end{align*}
To prove the other inequality, we use the following construction. Fix $\tau$ and let $\theta \in C([0,T],J)$. Let also $\epsilon > 0$ such that $\epsilon < \min (t_j-t_{j-1})$ and define $\theta^{\epsilon,\tau}$ as
\[
\theta^{\epsilon,\tau}_t = 
\begin{cases}
\theta_{t_{j-1}} + \frac{t-t_{j-1}}{\epsilon} \, (\theta_{t_{j}}-\theta_{t_{j-1}}) & \text{ if } t \in [t_{j-1},\,t_{j-1}+\epsilon] \:, \\
\theta_{t_{j}} & \text{ if } t \in [t_{j-1}+\epsilon,\, t_j] \:. \\
\end{cases}
\]
Then
\begin{align*}
& \left|\sum_{j=1}^{\# \tau} \theta_{t_j} (x_{t_j}-x_{t_{j-1}}) - (t_j-t_{j-1})h(\theta_{t_j}) - \int_0^T \theta_{t}^{\epsilon,\tau} dx_t + \int_0^T h(\theta_{t}^{\epsilon,\tau}) dt \right| \\
= \: & \left|\sum_{j=1}^{\# \tau} (\theta_{t_j} - \theta_{t_{j-1}}) \int_{t_{j-1}}^{t_{j-1}+\epsilon} \left(1-\frac{t-t_{j-1}}{\epsilon}\right) dx_t + \int_{t_{j-1}}^{t_{j-1}+\epsilon} h(\theta_{t}^{\epsilon,\tau})-h(\theta_{t_j}) dt \right| \\
\le \: & \sum_{j=1}^{\# \tau} \left|\theta_{t_j} - \theta_{t_{j-1}}\right| \left|\int_{t_{j-1}}^{t_{j-1}+\epsilon} \left(1-\frac{t-t_{j-1}}{\epsilon}\right) dx_t \right| \\
& \qquad\qquad\qquad\qquad\qquad\qquad\qquad\qquad+ 2 \epsilon \max\left\{ |h(\theta)|\,:\,\theta \in [\theta_{t_{j-1}},\theta_{t_{j}}]\right\} \\
\le \: & \sum_{j=1}^{\# \tau} \left|\theta_{t_j} - \theta_{t_{j-1}}\right| \, \mu_x\big(]0,\epsilon]\big) + 2 \epsilon \max\left\{ |h(\theta)|\,:\,\theta \in [\theta_{t_{j-1}},\theta_{t_{j}}]\right\} \underset{\epsilon \rightarrow 0}{\rightarrow} 0 \:,
\end{align*}
where $\mu_x$ is the measure associated with $x$. Hence 
\begin{align*}
\sup_{\theta \in C([0,T],J)} \!\sup_\tau \sum_{j=1}^{\# \tau} \theta_{t_j}& (x_{t_j}-x_{t_{j-1}}) - (t_j-t_{j-1})h(\theta_{t_j}) \\
& \le \sup_{\theta \in C([0,T],J)} \int_0^T \theta_{t} dx_t - \int_0^T h(\theta_{t}) dt 
\end{align*}
and 
\[
\Lambda^*(x) = \sup_{\theta \in C([0,T],J)} \int_0^T \theta_{t} dx_t - \int_0^T h(\theta_{t}) \, dt \:.
\]
We will now use \cite[Thm. 5.]{Roc1971} to obtain the result. Since $x$ has finite variation, the measure $dx_t$ is regular. Using the notation of \cite{Roc1971}, in our case the multifunction $D$ is the constant multifunction $t \mapsto D(t)=J$. Therefore $D$ is fully lower semi-continuous. Furthermore, since $[0,1] \subset J$, the interior of $D(t)$ is non-empty. The set $[0,T]$ is compact with no non-empty open sets of measure 0 and for every $u$ in the interior of $J$, and $V \in [0,T]$ open,
\[
\int_V |h(u)| \,dt \le T |h(u)| < \infty\:.
\]
\cite[Thm. 5.]{Roc1971} then implies that 
\[
\sup_{\theta \in C([0,T],J)} \int_0^T \theta_{t} \, dx_t - \int_0^T h(\theta_{t}) \, dt = \int_0^T h^*(\px{t}) \, dt + \int_0^T \H\left( \frac{d\nu_t}{d\theta_t} \right)\, d\theta_t \:,
\]
where 
\[
h^*(y) = \lim_{\epsilon \rightarrow 0} \sup_{\theta \in J} \left\{\theta y - h(\theta) \right\} \:, \qquad \H(y) = \lim_{\epsilon\rightarrow 0} \epsilon \, h^*(y/\epsilon) \:,
\]
%\begin{align*}
%h^*(y) & = \lim_{\epsilon \rightarrow 0} \sup_{\theta \in J} \left\{\theta y - h(\theta) \right\} \:, \\
%\H(y) 
%& = \lim_{\epsilon\rightarrow 0} \epsilon \, h^*(y/\epsilon) = y\left(u_+ \mathds{1}_{\{ y > 0 \}} + u_- \mathds{1}_{\{ y < 0 \}}\right) \:,
%\end{align*}
$\px{\phantom{t}}$ is the derivative of the absolutely continuous part of $x$, %$[u_-,u_+]=J$, 
$\nu_t$ is the singular component of $dx_t$ with respect to $dt$ and $\theta_t$ is any non-negative, finite, regular, $\R$-valued Borel measure, with respect to which $\nu_t$ is absolutely continuous. 
\end{proof}
\begin{remark}
In particular, the proof of Theorem \ref{thm:LPDInfiniteDimensionalRateFunction} shows that, if $x$ does not belong to $V_r$, the set of trajectories $x : [0,t] \rightarrow \R$ with bounded variation, then $\Lambda^*(x) = \infty$. 
\end{remark}

\section{Variance reduction} \label{sec:VarianceReduction}

Denote $P(S)$ the payoff of an option on $(S_t)_{0\le t \le T}$. The price of an option is generally calculated as the expectation $\E(P(S))$ under a certain risk-neutral measure $\P$. For any equivalent measure $\Q$, the price of the option can be written
\[
\E(P(S)) = \E^\Q\left(P(S) \frac{d\P}{d\Q}\right) \:.
\]
The variance of $P(S)$ is 
\[
\var_\P\left(P(S)\right) = \E\left(P^2(S)\right)-\E^2\left(P(S)\right) \:,
\]
whereas 
\begin{align*}
\var_\Q\left( P(S) \frac{d\P}{d\Q} \right) 
& = \E^\Q\left(P^2(S) \left(\frac{d\P}{d\Q}\right)^2\right)-\left(\E^\Q\left(P(S) \frac{d\P}{d\Q}\right)\right)^2 \\
& = \E\left(P^2(S) \frac{d\P}{d\Q}\right)- \E^2\left(P(S)\right) \:. 
\end{align*}
We can therefore choose $\Q$ in order to reduce the variance of the random variable, whose expectation gives the price of the option. \\
A flexible class of measure changes introduced in \cite{Ge:Ta} is given by path dependent Esscher transform, that is the class of measures $\P_\theta$ such that
\[
\frac{d\P_\theta}{d\P} = \frac{e^{\int_0^T X_t \,d\theta_t}}{\E\left[e^{\int_0^T X_t \,d\theta_t}\right]}\:,
\]
where $\theta$ belongs to $M$, the set of signed measures on $[0,T]$. Denoting $H(X)=\log P\left(S_0\,e^{X}\right)$, the optimization problem writes
\begin{equation}\label{eq:OptimizationProblemExact}
\inf_{\theta \in M} \E\left[ \exp\left( 2H(X) - \int_0^T X_t \,d\theta_t + \G_1(\theta) \right)\right]\:,
\end{equation}
where
\[
\G_\epsilon(\theta) := \epsilon\log \E\left[e^{\frac{1}{\epsilon}\int_0^T X_t^\epsilon \,d\theta_t}\right] \:.
\]
The optimization problem \eqref{eq:OptimizationProblemExact} cannot be solved explicitly. We therefore choose to solve the problem asymptotically using the two following lemmas. 
Denote $\bar{M}$ the set of measures $\theta \in M$ with support on a finite set of points. We first give a lemma that characterizes the behaviour of $\G_\epsilon(\theta)$ as $\epsilon\rightarrow 0$, for $\theta \in \bar{M}$ as this will be sufficient for the cases that we will consider in Section \ref{sec:NumericalExamples} (see Prop. \ref{prop:OptimalMeasureIsDiscrete}).
\begin{lemma}\label{lem:Gepsilon}
If Assumption \ref{ass:H2} holds, then for any measure $\theta\in \bar{M}$, such that for every $t \in [0,T]$, $\theta([t,T]) \in J$, we have
\[
\lim_{\epsilon \rightarrow 0}\G_\epsilon(\theta) = \int_0^T h(\theta([t,T])) \,dt \:.
\]
\end{lemma}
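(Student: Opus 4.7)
The plan is to reduce everything to Theorem \ref{thm:LambdaTau}, which already handles the scaled log-moment generating function at finitely many times. Since $\theta \in \bar{M}$, I write $\theta = \sum_{k=1}^n a_k \delta_{t_k}$ with $0 < t_1 < \cdots < t_n \leq T$, real weights $a_1,\ldots,a_n$, and put $t_0 := 0$ and $\tau = \{t_1,\ldots,t_n\}$. Using $X_t^\epsilon = \epsilon X_{t/\epsilon}$, a direct computation gives $\frac{1}{\epsilon}\int_0^T X_t^\epsilon \, d\theta_t = \sum_{k=1}^n a_k X_{t_k/\epsilon}$, so $\G_\epsilon(\theta) = \epsilon\,\Lambda_{\epsilon,\tau}(a/\epsilon)$, which is exactly the quantity analysed in Theorem \ref{thm:LambdaTau}.

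Next I would match up the hypotheses. Set $A_j := \sum_{k=j}^n a_k = \theta([t_j, T])$; the function $t \mapsto \theta([t,T])$ is then the step function taking the value $A_j$ on $(t_{j-1}, t_j]$ and the value $0$ on $(t_n, T]$. The lemma's assumption $\theta([t,T]) \in J$ for every $t \in [0,T]$ therefore coincides with the requirement $A_j \in J$ for every $j$, so Theorem \ref{thm:LambdaTau} applies and yields $\lim_{\epsilon\rightarrow 0} \G_\epsilon(\theta) = \sum_{j=1}^n (t_j - t_{j-1}) h(A_j)$. Reading the right-hand side as the integral of the step function $t \mapsto h(\theta([t,T]))$ immediately gives $\int_0^{t_n} h(\theta([t,T]))\, dt$.

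The only point needing a separate line is the tail contribution when $t_n < T$: I must justify that $\int_{t_n}^T h(0)\, dt = 0$, i.e.\ that $h(0) = 0$. This uses $R(0,0) = 0$, which follows directly from the L\'evy--Khintchine form of $R$, so $w \equiv 0$ solves $\dot w = R(0,w)$; combined with $0 \in \B(0)$ given by Theorem \ref{thm:KellerResselProperties}, uniqueness of the stable equilibrium forces $w(0) = 0$, whence $h(0) = F(0, w(0)) = F(0,0) = 0$. That is the only subtle step: otherwise the proof is pure bookkeeping, turning the finite sum provided by Theorem \ref{thm:LambdaTau} into the desired integral representation via the survival function $t \mapsto \theta([t,T])$.
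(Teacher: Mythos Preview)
Your proof is correct and follows the same approach as the paper: both reduce $\G_\epsilon(\theta)$ to $\epsilon\,\Lambda_{\epsilon,\tau}(a/\epsilon)$, invoke Theorem~\ref{thm:LambdaTau}, and identify the resulting sum $\sum_{j=1}^n (t_j-t_{j-1})h(A_j)$ with the integral $\int_0^T h(\theta([t,T]))\,dt$. You are in fact a bit more careful than the paper, which tacitly takes $t_n=T$ (consistent with the convention $\tau=\{0<t_1<\dots<t_n=t\}$ used earlier) and so does not discuss the tail; your explicit verification that $h(0)=0$ handles the case $t_n<T$ cleanly.
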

\begin{proof}
Denote $\tau = \{t_1,...,t_n\}$, the support of $\theta$. We then obtain
\begin{align*}
\lim_{\epsilon \rightarrow 0} \epsilon\log \E\left[e^{\frac{1}{\epsilon}\int_0^T X_t^\epsilon \,d\theta_t}\right] 
& = \lim_{\epsilon \rightarrow 0} \epsilon\log \E\left[ e^{\frac{1}{\epsilon}\sum_{j=1}^n X_{t_j}^\epsilon \,\theta\big((t_{j-1}, t_j\,]\big)}\right] \\
& = \sum_{j=1}^n (t_j-t_{j-1}) \, h\left(\theta\big((t_{j-1}, t_j\,]\big)\right) \\
& = \int_0^T h(\theta([t,T])) \,dt 
\end{align*}
by applying Theorem \ref{thm:LambdaTau} to $\theta=\left(\theta\big((t_0, t_1\,]\big),...,\theta\big((t_{n-1}, t_n\,]\big)\right)$.
%By taking the limit we therefore obtain
%\begin{align*}
%\lim_{n \rightarrow \infty} \lim_{\epsilon \rightarrow 0} \epsilon\,\log \E\left[e^{\frac{1}{\epsilon}\sum_{j=1}^n X_{t_j}^\epsilon \,\theta\left(]t_{j-1}, t_j\,]\right)}\right] 
%& = \lim_{n \rightarrow \infty} \sum_{j=1} (t_j-t_{j-1}) \, h\left(\theta\left(]t_{j-1}, t_n\,]\right)\right) \\
%& = \int_0^t h(\theta([s,t])) \,ds \:.
%\end{align*}
%Therefore,
%{\color{red}How to justify limit switching?}
%\begin{align*}
%\lim_{\epsilon \rightarrow 0} \epsilon\log \E\left[e^{\frac{1}{\epsilon}\int_0^t X_s^\epsilon \,d\theta_s}\right] 
%& = \lim_{\epsilon \rightarrow 0} \epsilon\log \E\left[ \lim_{n \rightarrow \infty} e^{\frac{1}{\epsilon}\sum_{j=1}^n X_{t_j}^\epsilon \,\theta\left(]t_{j-1}, t_j\,]\right)}\right] \\
%& = \lim_{\epsilon \rightarrow 0} \lim_{n \rightarrow \infty} \epsilon\log \E\left[e^{\frac{1}{\epsilon}\sum_{j=1}^n X_{t_j}^\epsilon \,\theta\left(]t_{j-1}, t_j\,]\right)}\right] \\
%& = \lim_{n \rightarrow \infty} \lim_{\epsilon \rightarrow 0} \epsilon\,\log \E\left[e^{\frac{1}{\epsilon}\sum_{j=1}^n X_{t_j}^\epsilon \,\theta\left(]t_{j-1}, t_j\,]\right)}\right] 
%\end{align*}
%and the result is proved.
\end{proof}
Next, we give a result that characterizes the behaviour of the variance minimization problem \eqref{eq:OptimizationProblemExact} where $X$ has been replaced by $X^\epsilon$ as $\epsilon \rightarrow 0$. 
\begin{lemma} \label{lem:LimitFunction}
Let $\theta \in \bar{M}$ such that $- \theta([t,T]) \in J^\circ$ for every $t \in [0,T]$. Assume that the assumptions of Theorem \ref{thm:LDPFiniteDimensional} hold. Assume furthermore that $H: \F([0,T],\R) \rightarrow \R$ is bounded from above by a constant $C$ and continuous on $D$ the set of functions $x \in V_r$, such that $H(x) > -\infty$, with respect with to the pointwise convergence topology. Then 
\begin{align*}
& \lim_{\epsilon \rightarrow 0} \epsilon \log \E\left[ \exp\left(\frac{2H(X^\epsilon) - \int_0^T X_t^\epsilon \,d\theta_t +\G_\epsilon(\theta)}{\epsilon} \right)\right] \\
& \hspace*{3cm} = \sup_{x\in D} \left\{ 2H(x) - \int_0^T x_t d\theta_t - \Lambda^*(x) \right\} + \int_0^T h(\theta([t,T])) \,dt \:.
\end{align*} 
\end{lemma}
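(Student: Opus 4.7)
The plan is to combine the pathwise LDP of Theorem \ref{thm:LDPInfiniteDimensionalExistence} with Varadhan's Lemma (Theorem \ref{thm:Varadhan}) applied to the functional $\varphi(x) := 2H(x) - \int_0^T x_t \, d\theta_t$, handling the deterministic term $\G_\epsilon(\theta)$ separately via Lemma \ref{lem:Gepsilon}. Since $\G_\epsilon(\theta)$ does not depend on the trajectory, we can pull it out of the expectation:
\[
\epsilon \log \E\left[\exp\left(\frac{2H(X^\epsilon) - \int_0^T X_t^\epsilon d\theta_t + \G_\epsilon(\theta)}{\epsilon}\right)\right] = \G_\epsilon(\theta) + \epsilon \log \E\left[\exp\left(\frac{\varphi(X^\epsilon)}{\epsilon}\right)\right],
\]
and by Lemma \ref{lem:Gepsilon} the first summand tends to $\int_0^T h(\theta([t,T]))\,dt$.

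Next I would verify the hypotheses of Varadhan's Lemma for $\varphi$. Because $\theta \in \bar M$ has finite support $\{t_1,\ldots,t_n\}$, the map $x \mapsto \int_0^T x_t \, d\theta_t = \sum_{j=1}^n x_{t_j}\,\theta(\{t_j\})$ is a finite linear combination of coordinate evaluations, hence continuous in the pointwise convergence topology on $\F([0,T],\R)$; combined with the assumed continuity of $H$ on $D$, this makes $\varphi$ continuous on its effective domain (as a $\R\cup\{-\infty\}$-valued function). For the moment condition, I would use Abel summation
\[
\int_0^T X_t^\epsilon \, d\theta_t = \sum_{j=1}^n \Theta_j\,(X_{t_j}^\epsilon - X_{t_{j-1}}^\epsilon), \qquad \Theta_j := \theta([t_j,T]),
\]
with $X_{t_0}^\epsilon = 0$. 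The assumption $-\theta([t,T]) \in J^\circ$ gives $-\Theta_j \in J^\circ$ for every $j$, so openness of $J^\circ$ provides a $\gamma > 1$ such that $-\gamma \Theta_j \in J$ for all $j$. Applying Theorem \ref{thm:LambdaTau} with these rescaled coefficients and using $H \le C$ yields
\[
\limsup_{\epsilon \to 0} \epsilon \log \E\left[\exp\left(\frac{\gamma \varphi(X^\epsilon)}{\epsilon}\right)\right] \le 2\gamma C + \sum_{j=1}^n (t_j - t_{j-1})\, h(-\gamma \Theta_j) < \infty,
\]
which is the moment bound required by Theorem \ref{thm:Varadhan}.

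Varadhan's Lemma then delivers
\[
\lim_{\epsilon \to 0} \epsilon \log \E\!\left[\exp\!\left(\frac{\varphi(X^\epsilon)}{\epsilon}\right)\right] = \sup_{x \in \F([0,T],\R)} \Bigl\{2H(x) - \int_0^T x_t \, d\theta_t - \Lambda^*(x) \Bigr\},
\]
and the supremum may be restricted to $D$ because $\Lambda^*(x) = \infty$ outside $V_r$ (remark after Theorem \ref{thm:LPDInfiniteDimensionalRateFunction}) and $H(x) = -\infty$ on $V_r \setminus D$. Adding this to the limit of $\G_\epsilon(\theta)$ gives the claimed identity. The step I anticipate being most delicate is the continuity issue: $H$ is only assumed continuous on $D$, so to apply Varadhan's Lemma one must invoke the $\R\cup\{-\infty\}$-valued version from \cite{Gu:Ro2008}, where it suffices that $\varphi$ be continuous on its effective domain and that the expectation-side mass concentrated on $\{\varphi = -\infty\}$ be exponentially negligible---both of which are consequences of the finite support of $\theta$ and the pointwise continuity of $H$ on $D$.
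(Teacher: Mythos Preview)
Your proposal is correct and follows essentially the same approach as the paper: separate $\G_\epsilon(\theta)$ via Lemma~\ref{lem:Gepsilon}, check continuity of $\varphi$ using the finite support of $\theta$, establish the moment bound by choosing $\gamma>1$ with $-\gamma\theta([t,T])\in J$ (the paper invokes Lemma~\ref{lem:Gepsilon} for this, you invoke Theorem~\ref{thm:LambdaTau} directly, which is the same computation), and apply Varadhan's Lemma. Your explicit justification for restricting the supremum to $D$ and your remark on the $\R\cup\{-\infty\}$-valued version of Varadhan's Lemma are details the paper leaves implicit.
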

\begin{proof}
First note that, by Lemma \ref{lem:Gepsilon}, 
\begin{align*}
& \lim_{\epsilon \rightarrow 0} \epsilon \log \E\left[ \exp\left(\frac{2H(X^\epsilon) - \int_0^T X_t^\epsilon \,d\theta_t +\G_\epsilon(\theta)}{\epsilon} \right)\right] \\
& \hspace*{2cm} = \lim_{\epsilon \rightarrow 0} \epsilon \log \E\left[ \exp\left(\frac{2H(X^\epsilon) - \int_0^T \! X_t^\epsilon \,d\theta_t }{\epsilon} \right)\right] + \int_0^T h(\theta([t,T])) \,dt \:.
\end{align*}
We therefore just need to prove that 
\[
\lim_{\epsilon \rightarrow 0} \epsilon \log \E\!\left[ \exp\left(\frac{2H(X^\epsilon) - \int_0^T \!\! X_t^\epsilon \,d\theta_t }{\epsilon} \right)\right]\! = \sup_{x\in D} \left\{ 2H(x) - \!\int_0^T \!\!\! x_t d\theta_t - \Lambda^*(x) \right\} \:.
\]
Denote $\varphi: \,\F([0,T],\R) \rightarrow \R$ the function $\varphi(x) = 2H(x) - \int_0^T x_t \,d\theta_t$. Since $H$ is assumed to be continuous and $\theta$ has support on $\tau$, $\varphi$ is continuous. Let us show the integrability condition of Theorem \ref{thm:Varadhan}. For every $\gamma$
\begin{align*}
& \hspace*{-1.5cm}\limsup_{\epsilon \rightarrow 0} \epsilon\,\log \E\left[\exp\left(\frac{\gamma\,\varphi(X^\epsilon)}{\epsilon} \right) \right] \\
& = \limsup_{\epsilon \rightarrow 0} \epsilon \log \E\left[ \exp\left(\frac{2\gamma H(X^\epsilon) - \gamma \int_0^T X_t^\epsilon \,d\theta_t}{\epsilon} \right)\right] \\
& \le 2 \gamma C + \limsup_{\epsilon \rightarrow 0} \epsilon\log \E\left[e^{\frac{1}{\epsilon}\int_0^T X_t^\epsilon \,d(-\gamma\theta)_t}\right] \:.
\end{align*}
Since $- \theta([t,T]) \in J^\circ$ for every $t \in [0,T]$, there exists $\gamma > 1$ such that $-\gamma\theta([t,T])$ remains in $J$ for every $t$. Therefore Lemma \ref{lem:Gepsilon} applies and  
\[
\limsup_{\epsilon \rightarrow 0} \epsilon\,\log \E\left[\exp\left(\frac{\gamma\,\varphi(X^\epsilon)}{\epsilon} \right) \right] \le 2 \gamma C + \int_0^T h(-\gamma \, \theta([t,T])) \,dt < \infty \:.
\]
Theorem \ref{thm:Varadhan} then applies and yields the result.
\end{proof}
\begin{definition}
Let $\theta \in M$. We say that $\theta$ is \textit{asymptotically optimal} if it minimises
\[
\sup_{x\in V_r} \left\{ 2H(x) - \int_0^T x_t \, d\theta_t - \Lambda^*(x) \right\} + \int_0^T h(\theta([t,T])) \,dt \:.
\]
\end{definition}
In general, $\Lambda^*$ is not easy to calculate explicitly. To solve this problem, we cite the following theorem of \cite{Ge:Ta}.
\begin{theorem}\label{thm:Equivalence}
Let $H$ be concave and assume that the set $\{ x \in V_r \,:\, H(x) > -\infty \}$ is non-empty and contains a constant element. Assume furthermore that $H$ is continuous on this set with respect to the topology of pointwise convergence, that  $h$ is lower semi-continuous with open and bounded effective domain and that there exists a $\lambda > 0$ such that $h$ is complex-analytic on $\{z \in \C \,:\, |\text{Im}(z)| < \lambda\}$. 
Then
\begin{align*}
\inf_{\theta \in M} \sup_{x\in V_r} \left\{ 2H(x) - \int_0^T x_t d\theta_t - \Lambda^*(x) \right\} & + \int_0^T h(\theta([t,T])) \,dt \\ 
& \hspace*{-1cm} = 2 \, \inf_{\theta \in M} \left\{ \hat{H}(\theta) + \int_0^T h(\theta([t,T])) \,dt \right\} \:,
\end{align*}
where 
\[
\hat{H}(\theta) = \sup_{x\in V_r} \left\{ H(x) - \int_0^T x_t \, d\theta_t \right\} \:.
\]
Furthermore, if $\theta^*$ minimises the left-hand side of the above equation, it also minimises the right-hand side.
\end{theorem}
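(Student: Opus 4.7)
The plan is to follow the strategy used for the analogous result in the exponential L\'evy setting of \cite{Ge:Ta}, adapted to the affine stochastic volatility case. The three main ingredients will be a dual representation of $\Lambda^*$ that is essentially already contained inside the proof of Theorem \ref{thm:LPDInfiniteDimensionalRateFunction}, an integration-by-parts identity, and a minimax argument combined with the convexity of $h$.

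First, I would invoke the intermediate formula obtained in the proof of Theorem \ref{thm:LPDInfiniteDimensionalRateFunction}, which, for $x \in V_r$, reads
\[
\Lambda^*(x) = \sup_{\theta' \in C([0,T],J)} \left\{ \int_0^T \theta'_t \, dx_t - \int_0^T h(\theta'_t) \, dt \right\}.
\]
Parametrising admissible $\theta'$ as $\theta'_t = \eta([t,T])$ for $\eta$ in $M_J := \{ \eta \in M \,:\, \eta([t,T]) \in J \text{ for all } t \}$, and using the integration-by-parts identity $\int_0^T \eta([t,T]) \, dx_t = \int_0^T x_t \, d\eta_t$ (valid because $x_0 = 0$), one rewrites this as
\[
\Lambda^*(x) = \sup_{\eta \in M_J} \left\{ \int_0^T x_t \, d\eta_t - \int_0^T h(\eta([t,T])) \, dt \right\}.
\]
Substituting this into the inner supremum on the left-hand side of the theorem turns it, for each fixed $\theta \in M$, into the sup-inf expression
\[
\sup_{x \in V_r} \inf_{\eta \in M_J} \left\{ 2 H(x) - \int_0^T x_t \, d(\theta + \eta)_t + \int_0^T h(\eta([t,T])) \, dt \right\}.
\]

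Second, I would exchange the sup and the inf by a minimax theorem. The integrand is concave in $x$ (since $H$ is concave and the remaining $x$-dependence is linear) and convex in $\eta$ (since $h$ is convex and the remaining $\eta$-dependence is linear). After swapping, recognising $2 H(x) - \int x \, d(\theta+\eta) = 2 \bigl[ H(x) - \int x \, d\tfrac{\theta+\eta}{2} \bigr]$ and performing the inner $\sup_x$ yields
\[
\inf_{\eta \in M_J} \left\{ 2 \hat{H}\!\left( \frac{\theta + \eta}{2} \right) + \int_0^T h(\eta([t,T])) \, dt \right\}.
\]
Taking now the outer $\inf_{\theta \in M}$ and changing variables to $\xi = (\theta + \eta)/2$ gives
\[
\inf_{\xi \in M} \left\{ 2 \hat{H}(\xi) + \inf_{\theta \in M} \int_0^T \bigl[ h(2 \xi([t,T]) - \theta([t,T])) + h(\theta([t,T])) \bigr] \, dt \right\}.
\]
Convexity of $h$ then gives the pointwise inequality $h(2 \xi([t,T]) - \theta([t,T])) + h(\theta([t,T])) \ge 2 h(\xi([t,T]))$, with equality at $\theta = \xi$. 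This delivers the right-hand side of the claimed identity and, since the inner minimum is attained at $\theta = \xi$, shows that any $\theta^*$ minimising the left-hand side must also minimise the right-hand side.

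The main obstacle will be justifying the minimax exchange rigorously, since neither $V_r$ (equipped with the pointwise-convergence topology) nor the space of signed measures $M$ is compact. The hypotheses on $h$ — lower semi-continuity, open and bounded effective domain, and complex-analyticity on a horizontal strip $\{|\mathrm{Im}(z)| < \lambda\}$ — are precisely what is needed to localise the problem: boundedness of $D_h$ forces the relevant $\eta([t,T])$ to lie in a compact subset of $J$, while the good-rate-function property of $\Lambda^*$ together with the analyticity of $h$ provide the growth and exponential-tightness control required to apply a Sion-type minimax theorem, along exactly the lines developed in \cite{Ge:Ta}.
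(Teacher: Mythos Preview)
The paper does not give its own proof of this theorem: it is explicitly stated as a result \emph{cited} from \cite{Ge:Ta} (``we cite the following theorem of \cite{Ge:Ta}''), with no argument supplied. Your proposal is therefore not competing against a proof in the paper but rather sketching the argument behind the cited reference, and indeed you say as much in your first sentence. In that sense your approach and the paper's coincide by construction: both defer to \cite{Ge:Ta}.

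As for the sketch itself, the overall architecture --- dual representation of $\Lambda^*$, integration by parts, minimax, then the convexity inequality $h(2\xi-\theta)+h(\theta)\ge 2h(\xi)$ --- is the right one and matches the strategy in \cite{Ge:Ta}. Two points deserve care if you flesh this out. First, parametrising $\theta'_t=\eta([t,T])$ with $\eta\in M$ produces c\`adl\`ag functions of bounded variation rather than all of $C([0,T],J)$, so you need the observation (already implicit in the proof of Theorem~\ref{thm:LPDInfiniteDimensionalRateFunction}) that the supremum defining $\Lambda^*(x)$ is unchanged when taken over this class. Second, and as you correctly identify, the minimax exchange is the genuine technical step: neither side is compact, and the justification in \cite{Ge:Ta} relies on the specific hypotheses on $h$ (bounded open effective domain, analyticity on a strip) to obtain the required coercivity and semicontinuity; you would need to reproduce or invoke that argument rather than just gesture at Sion's theorem.
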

We finally give a result for the case where $H$ depends on $x$ only through $x_{t_1},....,x_{t_n}$.
\begin{proposition}\label{prop:OptimalMeasureIsDiscrete}
Let $\tau = \{t_1,...,t_n\}$ and let $H: \F([0,t],\R) \rightarrow \R \cup \{-\infty\}$ be a log-payoff depending on $x$ only through $x_\tau$. Then for every $\theta \in M$ such that $\theta(\tau) \neq \theta([0,T])$, $\hat{H}(\theta) = \infty$.
\end{proposition}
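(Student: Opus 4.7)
The strategy is to exploit the mismatch between what $H$ and $\int_0^T x_t\,d\theta_t$ ``see'': by assumption $H$ depends on $x$ only through the finite vector $x_\tau=(x_{t_1},\dots,x_{t_n})$, whereas the $\theta$-integral depends on $x$ on all of $[0,T]$, in particular on the complement $S:=[0,T]\setminus\tau$ on which, by hypothesis, $\theta$ carries nonzero mass. I will construct a bounded-variation perturbation $z$ that vanishes on $\tau\cup\{0\}$ but pairs nontrivially with $\theta$, and then let its amplitude tend to infinity inside $\widehat H(\theta)$.

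\textbf{Step 1: pick a reference point.} We may assume there exists $x_\star\in V_r$ with $H(x_\star)>-\infty$; otherwise $\widehat H(\theta)=-\infty$ and the supremum is trivial.

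\textbf{Step 2: produce the perturbation.} I would build $z:[0,T]\to\R$ of bounded variation with $z(0)=0$, $z(t_i)=0$ for every $t_i\in\tau$, and $\int_0^T z_t\,d\theta_t\neq 0$. Since $\tau\cup\{0\}$ is finite, the open set $S':=[0,T]\setminus(\tau\cup\{0\})$ is a finite union of open intervals; the assumption $\theta(\tau)\neq\theta([0,T])$ gives $\theta(S')\neq 0$ (the possible atom at $0$ is irrelevant). By regularity of the signed Borel measure $\theta$, some open subinterval $(a,b)\subset S'$ satisfies $\theta((a,b))\neq 0$: if $\theta$ vanished on every open subinterval of $S'$, then $\theta|_{S'}=0$ by $\pi$-$\lambda$ / outer-regularity, contradicting $\theta(S')\neq 0$. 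On such an $(a,b)$ I would take a triangular tent $z$ supported in $[a,b]$, piecewise linear (hence of bounded variation) and vanishing on $\tau\cup\{0\}$. If a particular tent happens to give $\int z\,d\theta=0$, one can localize around a point of a Hahn set for $\theta|_{(a,b)}$ where $\theta$ is of one sign, so that narrow tents near that point have integrals approaching a nonzero multiple of $\theta$ on a small compact set.

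\textbf{Step 3: conclude.} Replacing $z$ by $-z$ if necessary, assume $\int_0^T z_t\,d\theta_t>0$. Set $x_n := x_\star - n\,z$. Then $x_n\in V_r$, $x_n(0)=0$, and $x_n$ agrees with $x_\star$ on $\tau$, hence $H(x_n)=H(x_\star)$, and
\[
H(x_n)-\int_0^T x_n\,d\theta_t \;=\; H(x_\star)-\int_0^T x_\star\,d\theta_t + n\!\int_0^T z_t\,d\theta_t \;\xrightarrow[n\to\infty]{}\; +\infty.
\]
Taking the supremum over $V_r$ yields $\widehat H(\theta)=+\infty$.

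\textbf{Expected obstacle.} The only nontrivial point is Step 2: guaranteeing the existence of a bounded-variation perturbation that vanishes on $\tau\cup\{0\}$ and pairs nontrivially with the signed measure $\theta|_{S'}$. This is standard (Hahn decomposition plus regularity of Borel measures), but needs the brief care described above to ensure one does not accidentally pick a test function orthogonal to $\theta$.
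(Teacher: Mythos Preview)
Your proof is correct and follows essentially the same strategy as the paper: fix a reference trajectory $x_\star$ with $H(x_\star)>-\infty$, perturb it by a function that vanishes on $\tau$ (so $H$ is unchanged) but pairs nontrivially with $\theta$, and send the amplitude to infinity. The paper's version is more direct, taking the perturbation to be $\alpha\,\mathds{1}_A$ for a set $A\subset[0,T]\setminus\tau$ with $\theta(A)\neq 0$ rather than your tent-function construction; your extra care in Step~2 (ensuring bounded variation and $z(0)=0$) is a legitimate refinement but not a different idea.
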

\begin{proof}
Assume that $\theta \in M$ is such that $\theta(\tau) \neq \theta([0,T])$. Then there exists a set $A \subset [0,T] \backslash \tau$, such that $\theta(A) \neq 0$. Fix $\bar{x} \in D$. By definition, $H(\bar{x}) > -\infty$. Then
\[
H(\hat{x} + \alpha \,\mathds{1}_A) - \int_0^T (\bar{x}_t + \alpha \,\mathds{1}_A) d\theta_t = H(\hat{x}) - \int_0^T \bar{x}_t d\theta_t - \alpha \,\theta(A) \:.
\]
By letting $\alpha$ tend to $\text{sgn}(\theta) \, \infty$, one can therefore increase indefinitely $H(x) - \int_0^T x_t d\theta_t$. Therefore, $\hat{H}(\theta) = \infty$.
\end{proof}

\section{Numerical examples} \label{sec:NumericalExamples}

In this section, we apply the variance reduction method to several examples. We first prove a result for options on the average value of the underlying over a finite set of points.  
\begin{proposition}\label{prop:HHat}
Let $\tau=\{t_1,...,t_n\}$ and consider an option with log-payoff 
\[
H(x) = \log\left( K - \frac{S_0}{n} \sum_{j=1}^n e^{x_{t_j}} \right)_+ \:.
\]
Then for any $\theta \in \bar{M}$ with support on $\theta = \{t_1,...,t_n\}$,
\begin{equation} \label{eq:HHat}
\hat{H}(\theta) = \log\left( \frac{K}{1 - \sum_{l=1}^n \theta_l} \right) - \sum_{m=1}^n \theta_m \log\left(\frac{-\theta_m\,n\,K/S_0}{1 - \sum_{l=1}^n \theta_l}\right)
\end{equation}
where we use the abuse of notation $\theta_j = \theta(\{t_j\})$. 
\end{proposition}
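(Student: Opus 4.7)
The plan is to reduce the defining supremum for $\hat H(\theta)$ to a strictly concave finite-dimensional optimization and solve it by first-order conditions. Because $\theta \in \bar M$ is supported on $\tau = \{t_1,\dots,t_n\}$, we have $\int_0^T x_t\,d\theta_t = \sum_{j=1}^n \theta_j x_{t_j}$, and since $H$ depends on $x$ only through $x_\tau$, the supremum reduces to maximizing
\[
\log\Bigl( K - \tfrac{S_0}{n}\sum_{j=1}^n e^{x_{t_j}} \Bigr) - \sum_{j=1}^n \theta_j x_{t_j}
\]
over $(x_{t_1},\dots,x_{t_n}) \in \R^n$ (off of $\tau$ one just interpolates linearly to stay in $V_r$), subject to the implicit constraint that the argument of the log be strictly positive.

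Next I would pass to the exponential coordinates $y_j = e^{x_{t_j}} > 0$ and maximize
\[
G(y) = \log\Bigl(K - \tfrac{S_0}{n}\sum_{j=1}^n y_j\Bigr) - \sum_{j=1}^n \theta_j \log y_j
\]
on the open domain $\{y \in \R_{>0}^n : \sum_j y_j < nK/S_0\}$. In the regime where every atom $\theta_j$ is strictly negative, $G$ is strictly concave and tends to $-\infty$ on the boundary, so the unique maximizer is characterized by $\partial_{y_j} G = 0$. These first-order conditions reduce to $y_j = -\theta_j\,nA/S_0$ with $A := K - \tfrac{S_0}{n}\sum_i y_i$; summing in $j$ and solving for $A$ yields $A = K/(1 - \sum_l \theta_l)$. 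Plugging the resulting $y_j = -\theta_j\,nK/\bigl(S_0(1-\sum_l\theta_l)\bigr)$ and $x_{t_j} = \log y_j$ back into the objective produces exactly the right-hand side of \eqref{eq:HHat}.

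The only subtle point, rather than a genuine obstacle, is the implicit sign condition on the atoms of $\theta$. If some $\theta_j > 0$, then sending $x_{t_j} \to -\infty$ while leaving the other coordinates bounded keeps the payoff strictly positive yet drives $-\theta_j x_{t_j} \to +\infty$, so $\hat H(\theta) = +\infty$ and the formula \eqref{eq:HHat} has to be interpreted accordingly; when $\theta_j = 0$ the corresponding coordinate decouples and \eqref{eq:HHat} holds after deleting that term. Beyond flagging this caveat, the proof uses no input from the affine-process structure and is a self-contained convex-analytic identity for the log-arithmetic-average put payoff.
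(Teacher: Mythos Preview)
Your proof is correct and follows essentially the same route as the paper: reduce the supremum to a finite-dimensional optimization over $(x_{t_1},\dots,x_{t_n})$, solve the first-order conditions, and substitute back. You are in fact slightly more careful than the paper, which simply locates the critical point and plugs it in without verifying concavity or flagging the implicit sign constraint on the atoms $\theta_j$.
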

\begin{proof}
In this case,
\[
H(x) - \int_0^T x_t d\theta_t = \log\left( K - \frac{S_0}{n} \sum_{j=1}^n e^{x_{t_j}} \right)_+ \!\!\! -  \, \sum_{j=1}^n \theta_j x_{t_j} \:.
\]
When the option is out or at the money, the log-payoff is $-\infty$. Assume that $x$ is such that $H(x) > -\infty$ and differentiate with respect to $x_{t_j}$. We obtain
\[
0 = \de{x_j}\left\{\log\left( K - \frac{S_0}{n} \sum_{l=1}^n e^{x_l} \right) - \sum_{l=1}^n x_l \theta_l \right\} = \frac{- \frac{S_0}{n} \, e^{x_j}}{ K - \frac{S_0}{n} \sum_{l=1}^n e^{x_l}} - \theta_j \:.
\]
Therefore the $x$ that maximises $H(x) - \int_0^t x_s d\theta_s$ satisfies
\[
\frac{e^{x_{t_j}}}{\theta_j} =  - n\,\frac{K}{S_0} + \sum_{l=1}^n e^{x_{t_l}} =  - n\,\frac{K}{S_0} + \frac{e^{x_{t_j}}}{\theta_j} \sum_{l=1}^n \theta_l \:,
\]
for every $j$. Therefore
\[
x_{t_j} = \log\left(\frac{-\theta_j\,n\,K/S_0}{1 - \sum_{l=1}^n \theta_l}\right) \:.
\]
Inserting $x_{t_j}$ in the value of $H(x) - \int_0^T x_t \, d\theta_t$, we obtain the result.
\end{proof}

\subsection{European and Asian put options in the Heston model}

Consider the Heston model \cite{Hes1993}
\begin{equation}\label{eq:HestonDynamicsP}
\begin{aligned}
dX_t & = -\frac{V_t}{2} \, dt + \sqrt{V_t} \, dW_t^1 \:,  && X_0 = 0\\
dV_t & = \lambda( \mu - V_t) \, dt + \zeta \sqrt{V_t} \, dW_t^2 \:,  && V_0 > 0 \\
& \hspace*{-.5cm} d\q{W^1,W^2}_t = \rho \, dt \:,
\end{aligned}
\end{equation}
where $W^1,W^2$ are standard $\P$-Brownian motions. The Laplace transform of $(X_t,V_t)$ is
\[
\E\left( e^{u X_t + w V_t} \right) = e^{ \phi(t,u,w) + \psi(t,u,w) V_0 + u X_0 } \:,
\]
where $\phi, \psi$ satisfy the Riccati equations 
\begin{equation}\label{eq:RiccatiEquations}
\begin{aligned}
\de{t} \phi(t,u,w) & = F(u, \psi(t,u,w)) \quad && \phi(0,u,w) = 0 \\
\de{t} \psi(t,u,w) & = R(u, \psi(t,u,w)) \quad && \psi(0,u,w) = w
\end{aligned}
\end{equation}
for $F(u,w) = \lambda \mu w$ and 
\[
R(u,w) = \frac{\zeta^2}{2} \, w^2 + \zeta \rho \, u w - \lambda w + \frac{1}{2} (u^2 - u) \:.
\]
A standard calculation shows that the solution of the Riccati equations \eqref{eq:RiccatiEquations} is
\begin{equation}\label{eq:RiccatiSolution}
\begin{aligned}
\psi(t,u,w) & = \frac{1}{\zeta} \left(\frac{\lambda}{\zeta} - \rho u \right) - \frac{\gamma}{\zeta^2} \,\frac{\tanh\left(\frac{\gamma}{2} \, t\right) + \eta}{1 + \eta \, \tanh\left(\frac{\gamma}{2} \, t\right)}  \\
\phi(t,u,w) & = \mu\,\frac{\lambda}{\zeta}\left(\frac{\lambda}{\zeta} - \rho u \right) t - 2\mu \,\frac{\lambda}{\zeta^2}\,\log\left(\cosh\left(\frac{\gamma}{2} \, t\right) + \eta \,\sinh\left(\frac{\gamma}{2} \, t\right)\right) \:,
\end{aligned}
\end{equation}
where $\gamma = \gamma(u) = \zeta \, \sqrt{\left(\frac{\lambda}{\zeta} - \rho u\right)^2 \!\!+ \frac{1}{4} - \!\left(u-\frac{1}{2}\right)^2}$ and $\eta = \eta(u,w) = \frac{\lambda - \zeta\rho u - \zeta^2 w}{\gamma(u)}$. 
Furthermore, for the Heston model, the function $h$ is given by
\begin{equation}\label{eq:HestonH}
h(u) = \mu\,\frac{\lambda}{\zeta}\left(\frac{\lambda}{\zeta} - \rho u \right) - \mu\,\frac{\lambda}{\zeta^2} \, \gamma(u) \:.
\end{equation}
\begin{remark}
The log-Laplace transform of the Heston model converges to the log-Laplace transform $h$  of an NIG process \cite{Bar1997}, which is complex-analytic on a strip around the real axis, thus allowing to apply Theorem \ref{thm:Equivalence}.
\end{remark}
The following proposition describes the effect of the time dependent Esscher transform on the dynamics of the Heston model.
\begin{proposition}\label{prop:HestonDynamicsPTheta}
Let $\tau = \{t_1,...,t_n\}$ and $\P_\theta$ the measure given by 
\[
\frac{d\P_\theta}{d\P} = \frac{e^{\sum_{j=1}^n \theta_j \, X_{t_j}}}{\E\left[e^{\sum_{j=1}^n \theta_j \, X_{t_j}}\right]} \:.
\] 
Under $\P_\theta$, the dynamics of the $\P$-Heston process $(X_t,V_t)$ becomes
\begin{equation}\label{eq:HestonDynamicsPTheta}
\begin{aligned}
dX_t & = \left(\Theta_{\tau_t} + \zeta \rho \,\Psi\left(\tau_t-t, \Theta_{\tau_t}, ..., \Theta_n \right) -\frac{1}{2} \right) V_t \, dt + \sqrt{V_t} \, d\tilde{W}_t^1 \:,  && X_0 = 0\\
dV_t & = \tilde{\lambda}_t\, ( \tilde{\mu}_t - V_t) \, dt + \zeta \sqrt{V_t} \, d\tilde{W}_t^2 \:,  && V_0 = V_0 \\
& \hspace*{-.5cm} d\q{\tilde{W}^1,\tilde{W}^2}_t = \rho \, dt \:,
\end{aligned}
\end{equation}
where $\tilde{W}$ is 2-dimensional correlated $\P_\theta$-Brownian motion, $\Theta_j = \sum_{m=j}^n \theta_m$, and $\Phi$ and $\Psi$ are defined iteratively as
\begin{align*}
\Psi\left(s, \Theta_{j}, ..., \Theta_n \right) 
& = \psi\left(s, \Theta_{j}, \Psi\left(t_{j+1}-t_j, \Theta_{j+1} ..., \Theta_n \right)\right) \\
\Psi\left(s \right) 
& = 0 \\
\Phi\left(s, \Theta_{j}, ..., \Theta_n \right) 
& = \phi\left(s, \Theta_{j}, \Psi\left(t_{j+1}-t_j, \Theta_{j+1}, ..., \Theta_n \right)\right) \\
& \qquad+ \Phi\left(t_{j+1}-t_j, \Theta_{j+1}, ..., \Theta_n \right) \\
\Phi\left(s \right) 
& = 0
\end{align*}
and where, denoting $\tau_t = \inf\{s \in \tau \,:\, s \ge t \}$,
\[
\tilde{\lambda}_t = \lambda - \zeta \Theta_{\tau_t} \rho - \zeta^2 \,\Psi\left(\tau_t-t, \Theta_{\tau_t}, ..., \Theta_n \right) \quad \text{and} \quad \tilde{\mu}_t = \frac{\lambda \mu}{\tilde{\lambda}_t} \:.
\]
\end{proposition}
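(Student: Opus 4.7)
The plan is to identify the Radon-Nikodym density process, apply It\^o's formula together with the Riccati equations on each interval $(t_{j-1}, t_j)$ to put it in stochastic-exponential form, and then invoke Girsanov's theorem to read off the new drifts. Set $M_t = \E[d\P_\theta/d\P \,|\, \F_t]$, so that $M$ is a $\P$-martingale by construction (and hence Girsanov applies without further uniform-integrability checks). The task reduces to finding a closed-form expression for $M_t$ and computing its stochastic logarithm.

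First I would compute $M_t$ by a backward induction on the grid points. For $t \in [t_{j-1}, t_j)$, conditioning first on $\F_{t_{n-1}}$, then on $\F_{t_{n-2}}$, and so on down to $\F_t$, each step replaces one conditional expectation $\E[e^{\Theta_k X_{t_k} + \psi(\cdot)V_{t_k}}\,|\,\F_{t_{k-1}}]$ by $e^{\Theta_k X_{t_{k-1}} + \psi(t_k-t_{k-1}, \Theta_k, \psi(\cdot))V_{t_{k-1}} + \phi(t_k-t_{k-1}, \Theta_k, \psi(\cdot))}$, via the affine Laplace formula \eqref{eq:LaplaceTransform}. This is exactly the iteration defining $\Psi$ and $\Phi$, and yields, up to the normalising constant,
\[
M_t \;\propto\; \exp\!\Bigl( \textstyle\sum_{k=1}^{j-1} \theta_k X_{t_k} + \Theta_j X_t + \Psi(t_j-t,\Theta_j,\dots,\Theta_n)\,V_t + \Phi(t_j-t,\Theta_j,\dots,\Theta_n) \Bigr).
\]
Continuity of $M$ across each $t_j$ follows automatically from $\psi(0,u,w)=w$ and $\phi(0,u,w)=0$, which makes $\Psi$ and $\Phi$ continuous in time and ensures consistency when absorbing the factor $e^{\theta_j X_{t_j}}$.

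Writing $\Psi_t := \Psi(\tau_t-t,\Theta_{\tau_t},\dots,\Theta_n)$, the derivatives $\partial_t \Psi_t = -R(\Theta_{\tau_t},\Psi_t)$ and $\partial_t \Phi_t = -F(\Theta_{\tau_t},\Psi_t) = -\lambda\mu\Psi_t$ come from the Riccati equations \eqref{eq:RiccatiEquations}. Applying It\^o's formula to $\log M_t$ on $(t_{j-1},t_j)$, substituting the Heston SDEs \eqref{eq:HestonDynamicsP}, and plugging in the explicit form of $R$, the drift terms telescope (as they must, $\log M$ being the logarithm of a positive martingale) and one is left with
\[
\frac{dM_t}{M_t} \;=\; \Theta_{\tau_t}\sqrt{V_t}\,dW^1_t + \zeta\,\Psi_t\sqrt{V_t}\,dW^2_t.
\]
Girsanov's theorem then yields $d\tilde W^1_t = dW^1_t - (\Theta_{\tau_t}+\zeta\rho\Psi_t)\sqrt{V_t}\,dt$ and $d\tilde W^2_t = dW^2_t - (\rho\Theta_{\tau_t}+\zeta\Psi_t)\sqrt{V_t}\,dt$, a pair of $\P_\theta$-Brownian motions with correlation $\rho$ (using the decomposition $W^2=\rho W^1 + \sqrt{1-\rho^2}W^\perp$ to handle the correlated case). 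Substituting into \eqref{eq:HestonDynamicsP} and regrouping the $V_t\,dt$ terms gives exactly \eqref{eq:HestonDynamicsPTheta} with $\tilde\lambda_t$ and $\tilde\mu_t$ as stated.

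The only delicate point is bookkeeping: verifying that the iteratively defined $\Psi,\Phi$ match the conditional Laplace transform on every sub-interval, and checking that the drift from It\^o's formula collapses correctly against the Riccati term $R(\Theta_{\tau_t},\Psi_t)V_t$. Once the form of $M$ is correctly established, the rest is a direct (if somewhat lengthy) algebraic verification, with no further analytic difficulty.
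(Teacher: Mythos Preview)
Your proposal is correct and follows essentially the same route as the paper: compute the conditional density $M_t$ via backward conditioning on the grid (which produces the iterated $\Phi,\Psi$), apply It\^o to obtain $dM_t/M_t = \Theta_{\tau_t}\sqrt{V_t}\,dW^1_t + \zeta\Psi_t\sqrt{V_t}\,dW^2_t$, and then read off the $\P_\theta$-drifts from Girsanov. The paper's proof is a bit terser---it simply writes down $D(t,X_t,V_t)$ and differentiates---but the structure and all key steps coincide with yours.
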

\begin{proof}
Denote
\[
D(t,X_t,V_t) = \left.\frac{d\P_\theta}{d\P}\right|_{\F_t} \:.
\]
Then 
\begin{align*}
D(t,X_t,V_t) 
& = \frac{e^{\sum_{j=1}^{\tau_t-1} \theta_j \, X_{t_j}}}{\E\left[e^{\sum_{j=1}^n \theta_j \, X_{t_j}}\right]} \: \E\left[e^{\sum_{j=\tau_t}^{n} \theta_j \, X_{t_j}} \, \middle|\,\F_t\right] \\
& = \frac{e^{\sum_{j=1}^{\tau_t-1} \theta_j \, X_{t_j} + \Phi\left(\tau_t-t, \Theta_{\tau_t}, ..., \Theta_n \right)}}{e^{\Phi\left(t_1, \Theta_{1}, ..., \Theta_n \right) + \Psi\left(t_1, \Theta_{1}, ..., \Theta_n \right) \, V_0 + \Theta_{1} \, X_{0} }} \: e^{ \Psi\left(\tau_t-t, \Theta_{\tau_t}, ..., \Theta_n \right) \, V_t + \Theta_{\tau_t} \, X_{t} } \:.
\end{align*}
The dynamics of $D(t,X_t,V_t)$ can then be expressed using It\=o's Lemma as
\begin{align*}
dD(t,X_t,V_t) 
& = D(t,X_t,V_t) \left(\Theta_{\tau_t} dX_t + \Psi\left(\tau_t-t, \Theta_{\tau_t}, ..., \Theta_n \right) dV_t\right) + ...\, dt\\
& = D(t,X_t,V_t) \sqrt{V_t}\left(\Theta_{\tau_t} dW_t^1 + \zeta \, \Psi\left(\tau_t-t, \Theta_{\tau_t}, ..., \Theta_n \right) dW_t^2\right) \:.
\end{align*}
By Girsanov's theorem, 
\[
d\left(\begin{array}{c}
\tilde{W}_t^1 \\
\tilde{W}_t^2
\end{array}\right)
=
d\left(\begin{array}{c}
W_t^1 \\
W_t^2
\end{array}\right) 
- \sqrt{V_t}
\left(\begin{array}{c}
\Theta_{\tau_t} + \zeta \rho \,\Psi\left(\tau_t-t, \Theta_{\tau_t}, ..., \Theta_n \right) \\
\Theta_{\tau_t} \rho + \zeta \,\Psi\left(\tau_t-t, \Theta_{\tau_t}, ..., \Theta_n \right)
\end{array}\right) dt 
\]
is a 2-dimensional Brownian motion under the measure $\P_\theta$. Replacing $W$ in eq. \eqref{eq:HestonDynamicsP} by $\tilde{W}$ gives the result.
\end{proof}
\begin{remark}
Prop. \ref{prop:HestonDynamicsPTheta} shows that the time-dependent Esscher transform changes a classical Heston process into a Heston process with time-inhomogeneous drift.
\end{remark}
\begin{remark}
Note that Assumption \ref{ass:H2} is verified in the Heston model only when $\rho = 0$. Indeed, $J=[u_-,u_+]$, where 
\[
u_\pm = \frac{\left(\frac{1}{2} - \frac{\lambda}{\zeta}\,\rho \right) \pm \sqrt{\left(\frac{1}{2} - \frac{\lambda}{\zeta}\,\rho \right)^2 + \frac{\lambda^2}{\zeta^2} \, (1 - \rho^2)}}{(1 - \rho^2)} \:,
\]
while
\[
w(u_-) = \frac{1}{\zeta}\left(\frac{\lambda}{\zeta} - \rho u_- \right) \qquad \text{and} \qquad w(u_+) = \frac{1}{\zeta}\left(\frac{\lambda}{\zeta} - \rho u_+ \right) \:.
\]
However, since the actual variance reduction problem is itself unsolvable, our goal is to find a good candidate measure that we can test numerically. The fact that we do not have the full theory to justify it is therefore not problematic. 
\end{remark}

\subsubsection{Numerical results for European put options}

In this case, by Prop. \ref{prop:OptimalMeasureIsDiscrete} with $n=1$ and $t_1=T$, $\theta$ has support on $\{T\}$. Using the abuse of notation $\theta := \theta(\{T\})$, we have 
\begin{equation}\label{eq:HHatHestonEuro}
\begin{aligned}
& \quad \hat{H}(\theta) + \int_0^T h(\theta([t,T])) \,dt \\
& = \: \log\left( \frac{K}{1 - \theta} \right) - \theta \log\left(\frac{-\theta\,K/S_0}{1 - \theta}\right) + T \,\mu\,\frac{\lambda}{\zeta} \left(\frac{\lambda}{\zeta} - \rho \, \theta  - \frac{\gamma(\theta)}{\zeta} \, \right) \:.
\end{aligned}
\end{equation}
In order to obtain $\theta$, we therefore differentiate \eqref{eq:HHatHestonEuro} with respect to $\theta$ and equate the derivative to 0 by dichotomy . \\
We simulate $N=10000$ trajectories of the Heston model with parameters $\lambda = 1.15$, $\mu = 0.04$, $\zeta = 0.2$, $\rho = -0.4$ and initial values $V_0 = 0.04$ and $S_0 = 1$, under both $\P$, eq. \eqref{eq:HestonDynamicsP}, and $\P_\theta$, eq. \eqref{eq:HestonDynamicsPTheta}, with $n=1$ and $t_1 = T$, using a standard Euler scheme with 200 discretization steps. For the $\P$-realisations $X^{(i)}$, we calculate the European put price as $\frac{1}{N} \sum_{j=1}^N \left(K-S_0 \, e^{X_T^{(i)}}\right)_+$
and for the $\P_\theta$-realisations $X^{(i,\theta)}$, as
\begin{equation}\label{eq:EstimatorTheta}
\frac{e^{\phi\left(T, \theta, 0 \right) + \psi\left(T, \theta, 0 \right) \, V_0 }}{N} \sum_{j=1}^N e^{-\theta \, X_{T}^{(i,\theta)}} \left(K-S_0 e^{X_T^{(i,\theta)}}\right)_+ \:.
\end{equation}
Each time, we compute the $\P_\theta$-standard deviation, the variance ratio and the adjusted variance ratio, i.e. the variance ratio divided by the ratio of simulation time. The latter measures the actual efficiency of the method, given the fact that simulating under the measure change takes in general slightly more time. \\
In Table \ref{tab:PutVarianceReductionMaturity}, we fix the strike to the value $K=1$ and let the maturity $T$ vary from $0.25$ to $3$, whereas in Tables \ref{tab:PutVarianceReductionStrikeT1} and \ref{tab:PutVarianceReductionStrikeT3}, we fix maturity to $T=1$ and to $T=3$, while we let the strike $K$ vary between $0.25$ and $1.75$. We calculate each time the price, the standard error, the variance ratio adjusted and not adjusted by the ratio of simulation time.
%\begin{table}[H]
%\centering
%\begin{tabular}{cccccc}
%\hline
%$T$ & 0.25 & 0.5 & 1 & 2 & 3 \\
%\hline
%\hline
%Variance ratio \qquad & 4.18 & 3.59 & 2.95 & 2.42 & 2.04 \\
%\hline
%\end{tabular}
%\caption{The variance ratio as function of the maturity for the European put option.}
%\label{tab:PutVarianceReductionMaturity}
%\end{table}
%\todo{New table}
\begin{table}[H]
\centering
\begin{tabular}{cccccc}
\hline
$T$ & Price & Std. error & Var. ratio & Adj. ratio & Time, s \\
\hline
\hline
0.25 & 0.0395 & 3.72 $\cdot 10^{-4}$ & 2.46 & 2.14 & 20.2 \\
0.5 & 0.0550 & 4.54 $\cdot 10^{-4}$ & 3.12 & 2.83 & 19.9 \\
1 & 0.0780 & 5.59 $\cdot 10^{-4}$ & 3.92 & 3.66 & 19.5 \\
2 & 0.111 & 7.20 $\cdot 10^{-4}$ & 4.21 & 3.89 & 19.7 \\
3 & 0.134 & 8.48 $\cdot 10^{-4}$ & 4.19 & 3.79 & 19.8 \\
\hline
\end{tabular}
\caption{The variance ratio as function of the maturity for at-the-money European put options.}
\label{tab:PutVarianceReductionMaturity}
\end{table}
\begin{table}[H]
\centering
\begin{tabular}{cccccc}
\hline
$K$ & Price & Std. error & Var. ratio & Adj. ratio & Time, s \\
\hline
\hline
0.5 & 0.00014 & 7.65 $\cdot 10^{-6}$ & 26.6 & 24.5 & 18.4 \\
0.75 & 0.00794 & 1.34 $\cdot 10^{-4}$ & 6.53 & 5.91 & 18.7 \\
1 & 0.0773 & 5.60 $\cdot 10^{-4}$ & 3.96 & 3.65 & 18.5 \\
1.25 & 0.261 & 8.62 $\cdot 10^{-4}$ & 4.20 & 3.78 & 18.9 \\
1.5 & 0.502 & 7.92 $\cdot 10^{-4}$ & 5.84 & 5.36 & 18.6 \\
1.75 & 0.749 & 6.84 $\cdot 10^{-4}$ & 8.45 & 7.29 & 19.7 \\
\hline
\end{tabular}
\caption{The variance ratio as function of the strike for the European put option with maturity $T=1$.}
\label{tab:PutVarianceReductionStrikeT1}
\end{table}
\begin{table}[H]
\centering
\begin{tabular}{cccccc}
\hline
$K$ & Price & Std. error & Var. ratio & Adj. ratio & Time, s \\
\hline
\hline
0.25 & 7.1 $\cdot 10^{-5}$ & 1.84 $\cdot 10^{-5}$ & 92.0 & 70.9 & 23.1 \\
0.5 & 0.00418 & 6.05 $\cdot 10^{-5}$ & 16.1 & 16.0 & 20.0 \\
0.75 & 0.0369 & 3.43 $\cdot 10^{-4}$ & 6.67 & 6.00 & 20.4 \\
1 & 0.133 & 8.51  $\cdot 10^{-4}$ & 4.24 & 4.15 & 20.2 \\
1.25 & 0.300 & 1.34 $\cdot 10^{-3}$ & 3.61 & 3.13 & 21.3 \\
1.5 & 0.517 & 1.60 $\cdot 10^{-3}$ & 3.47 & 3.30 & 19.9 \\
1.75 & 0.755 & 1.64 $\cdot 10^{-3}$ & 3.89 & 3.53 & 19.9 \\
\hline
\end{tabular}
\caption{The variance ratio as function of the strike for the European put option with maturity $T=3$.}
\label{tab:PutVarianceReductionStrikeT3}
\end{table}
In all the cases, we can see that the variance ratio becomes very interesting when the option gets deeply out of the money and less significant, yet still very  interesting, when the option is at or in the money. This corresponds to the natural behaviour of variance reduction techniques that involve measure changes, as the measure change is going to increase the probability of choosing a trajectory that is eventually going to enter the money. Note that the simulation time is only slightly larger when simulating with the measure change, while the time required for the optimization procedure is negligible compared with the simulation time. In Figure \ref{fig:VarianceHeston}, we fix the maturity to $T=1.5$ and plot the empirical variance of the estimator \eqref{eq:EstimatorTheta} as a function of $\theta$. Our method provides $\theta = -0.457$ as asymptotically optimal measure change. We can therefore see that the asymptotically optimal $\theta$ is very close to the optimal one.
\begin{figure}[H]
\centering
\includegraphics[width=10cm,keepaspectratio=true]{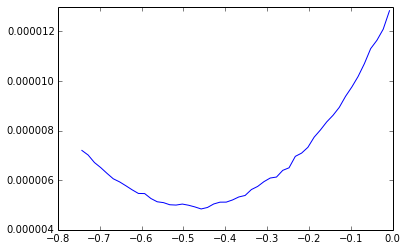}
\caption{The variance of the Monte-Carlo estimator as a function of $\theta$.}
\label{fig:VarianceHeston}
\end{figure}

\subsubsection{Numerical results for Asian put options}

We now consider the case of a (discretized) Asian put option. Here, the log-payoff is
\[
H(X) = \log\left(K-\frac{S_0}{n}\sum_{j=1}^n e^{X_{t_j}}\right)_+ \:,
\]
where $t_j = \frac{j}{n}\,T$. By Prop. \ref{prop:OptimalMeasureIsDiscrete}, the support of $\theta$ is $\{t_1,...,t_n\}$ and we can denote $\theta_j = \theta(\{t_j\})$. Using Prop. \ref{prop:HHat} and eq. \eqref{eq:HestonH}, the function that we need to minimize is
\[
\log\left( \frac{K}{1 - \sum_{l=1}^n \theta_l} \right) - \sum_{m=1}^n \theta_m \log\left(\frac{-\theta_m\,n\,K/S_0}{1 - \sum_{l=1}^n \theta_l}\right) + \frac{T}{n} \sum_{j=1}^n h\left(\sum_{l=j}^n \theta_l\right) 
\]
or, alternatively, denoting $\Theta_{j} = \sum_{l=j}^n \theta_l$,
\[
\log\left( \frac{K}{1 - \Theta_1} \right) - \sum_{m=1}^n (\Theta_m - \Theta_{m+1})\log\!\left(\frac{-(\Theta_m \!- \Theta_{m+1})\,nK/S_0}{1 - \Theta_1}\right) + \frac{T}{n} \sum_{j=1}^n h\left(\Theta_j\right) \:. 
\]
By differentiating with respect to $\Theta_j$, we obtain, for $j=2,...,n$,
\begin{equation}\label{eq:MinimizationEquation2toN}
\begin{aligned}
0 
& = \de{\Theta_j}\left\{ \hat{H}(\theta) + \frac{T}{n}\sum_{m=1}^n h\left(\Theta_m \right) \right\} \\
& = \frac{T\,h'\left(\Theta_j \right)}{n} - \log\left[-(\Theta_j - \Theta_{j+1})\right] + \log\left[-(\Theta_{j-1} - \Theta_{j})\right] \:,
\end{aligned}
\end{equation}
while, for $j=1$, we have
\begin{equation}\label{eq:MinimizationEquation1}
\begin{aligned}
0 
& = \de{\Theta_1}\left\{ \hat{H}(\theta) + \frac{T}{n}\sum_{m=1}^n h\left(\Theta_m \right) \right\} \\
= & \log\left( 1 - \Theta_1 \right) - \log(n\,K/S_0) + \frac{T}{n}\,h'\left(\Theta_1 \right) - \log\left[-(\Theta_1 - \Theta_{2})\right] \:.
\end{aligned}
\end{equation}
Finally, taking the exponential in eqs. \eqref{eq:MinimizationEquation2toN} and \eqref{eq:MinimizationEquation1}, we obtain
\begin{align*}
\Theta_2-\Theta_{1\phantom{-1}} & = \: \: (1\phantom{\Theta_{n}\:}-\Theta_{1\phantom{-1}}) \: \: e^{\frac{T}{n}\,h'(\Theta_1)} \cdot \frac{S_0}{n\,K} \\
\Theta_3-\Theta_{2\phantom{-1}} & = \: \: (\Theta_{2\phantom{-1}}-\Theta_{1\phantom{-1}}) \: \: e^{\frac{T}{n}\,h'(\Theta_2)} \\
\vdots \:\:\qquad & = \qquad\qquad \vdots \\
\Theta_n-\Theta_{n-1} & = \: \: (\Theta_{n-1}-\Theta_{n-2}) \: \: e^{\frac{T}{n} \, h'(\Theta_{n-1})} \\
-\Theta_{n\phantom{-1}} & = \: \: (\Theta_{n\phantom{-1}}-\Theta_{n-1}) \: \: e^{\frac{T}{n} \, h'(\Theta_{n})} \:.
\end{align*}
Finally, define $\mathcal{T}$ the real function that associates to $\Theta_n$ 
\[
\mathcal{T}(\Theta_n) = (1-\Theta_{1}) \e^{\frac{T}{n}\,h'(\Theta_1)} \cdot \frac{S_0}{n\,K} - \Theta_2-\Theta_{1} \:,
\]
where $\Theta_{n-1} = \Theta_{n} + \Theta_{n} \, e^{-\frac{T}{n} \, h'(\Theta_{n})}$ and iteratively,
\[
\Theta_{j-2} = \Theta_{j-1} - (\Theta_j-\Theta_{j-1}) \, e^{-\frac{T}{n} \, h'(\Theta_{j-1})} \:, \qquad j=n,...,3 \:.
\]
Equating $\mathcal{T}$ to 0 by dichotomy then gives the asymptotically optimal measure. \\
Again, we simulate $N=10000$ trajectories of the Heston model with parameters $\lambda = 1.15$, $\mu = 0.04$, $\zeta = 0.2$, $\rho = -0.4$ and initial values $V_0 = 0.04$ and $S_0 = 1$, under both $\P$, eq. \eqref{eq:HestonDynamicsP}, and $\P_\theta$, eq. \eqref{eq:HestonDynamicsPTheta}, with $n=200$ and $t_j=\frac{j}{n}\,T$, using a standard Euler scheme with 200 discretization steps. For the $\P$-realisations $X^{(i)}$, we calculate the Asian put price as 
\begin{equation}
\frac{1}{N} \sum_{j=1}^N \left(K-\frac{S_0}{n}\sum_{j=1}^n e^{X_{t_j}^{(i)}}\right)_+
\end{equation}
and for the $\P_\theta$-realisations $X^{(i,\theta)}$, as
\begin{equation}
\frac{e^{\Phi\left(t_1, \Theta_{1}, ..., \Theta_n \right) + \Psi\left(t_1, \Theta_{1}, ..., \Theta_n \right) \, V_0}}{N} \sum_{j=1}^N e^{-\sum_{j=1}^n \theta_j \, X_{t_j}^{(i,\theta)}} \left(K-\frac{S_0}{n}\sum_{j=1}^n e^{X_{t_j}^{(i)}}\right)_+ \:.
\end{equation}
Again, each time, we compute the $\P_\theta$-standard deviation and the adjusted and non-adjusted variance ratios. 
In Table \ref{tab:AsianPutVarianceReductionMaturity}, we fix maturity to $T = 1.5$ and let the strike $K$ vary between $0.6$ and $1.3$.
\begin{table}[H]
\centering
\begin{tabular}{cccccc}
\hline
$K$ & Price & Std. error & Var. ratio & Adj. ratio & Time, s \\
\hline
\hline
0.6 & 3.466 $\cdot 10^{-5}$ & 4.13 $\cdot 10^{-6}$ & 16.9 & 14.6 & 19.9 \\
0.7 & 0.000562 & 2.60 $\cdot 10^{-5}$ & 5.77 & 4.77 & 21.1 \\
0.8 & 0.00414 & 9.64 $\cdot 10^{-5}$ & 4.36 & 3.77 & 20.1 \\
0.9 & 0.0185 & 0.00024 & 3.48 & 3.09 & 20.6 \\
1 & 0.0558 & 0.00043 & 3.49 & 3.07 & 20.1 \\
1.1 & 0.120 & 0.00057 & 3.69 & 3.20 & 20.1 \\
1.2 & 0.206 & 0.00062 & 4.27 & 3.80 & 19.7 \\
1.3 & 0.301 & 0.00059 & 5.30 & 4.41 & 21.0 \\
\hline
\end{tabular}
\caption{The variance ratio as function of the strike for the Asian put option. $\lambda = 1.15$, $\mu = 0.04$, $\zeta = 0.2$, $\rho = -0.4$, $S_0 = 1$, $V_0 = 0.04$, $T = 1.5$, $N = 10000$, $200$ discretization steps.}
\label{tab:AsianPutVarianceReductionMaturity}
\end{table}
The conclusion is the same as for the European put option. Indeed, the variance ratio explodes when the option moves away from the money. Due to the time-dependence of the measure change, the adjusted variance ratio is consistently around 13\% below its non-adjusted version. The adjusted variance ratio remains however very interesting, with values above 3 around the money.

\subsection{European put  options in the Heston model with negative exponential jumps}

We now consider the Heston model with negative exponential jumps
\begin{equation}\label{eq:HestonWithJumpsDynamicsP}
\begin{aligned}
dX_t & = \left( \delta-\frac{V_t}{2}\right) \, dt + \sqrt{V_t} \, dW_t^1 + dJ_t \:,  && X_0 = 0\\
dV_t & = \lambda( \mu - V_t) \, dt + \zeta \sqrt{V_t} \, dW_t^2 \:,  && V_0 = V_0 \\
& \hspace*{-.5cm} d\q{W^1,W^2}_t = \rho \, dt \:,
\end{aligned}
\end{equation}
where $W^1,W^2$ are standard $\P$-Brownian motions and $(J_t)_{t \ge 0}$ is an independent compound Poisson process with constant jump rate $r$ and jump distribution $\text{Neg-}\text{Exp}(\alpha)$, i.e. the L\'evy measure of $(J_t)_{t \ge 0}$ is $\nu(dx)=r \, \alpha e^{\alpha x} \mathds{1}_{\{x<0\}} dx$. The martingale condition on $S = S_0 \, e^X$ imposes $\delta=\frac{r}{\alpha+1}$.
The Laplace transform of $(X_t,V_t)$ is
\[
\E\left( e^{u X_t + w V_t} \right) = e^{ \phi(t,u,w) + \psi(t,u,w) V_0 + u X_0 } \:,
\]
where $\phi, \psi$ satisfy the Riccati equations 
\begin{equation}\label{eq:JumpRiccatiEquations}
\begin{aligned}
\de{t} \phi(t,u,w) & = F(u, \psi(t,u,w)) \quad && \phi(0,u,w) = 0 \\
\de{t} \psi(t,u,w) & = R(u, \psi(t,u,w)) \quad && \psi(0,u,w) = w
\end{aligned}
\end{equation}
for $F(u,w) = \lambda \mu \, w + \tilde{\kappa}(u)$, where $\tilde{\kappa}(u) = \frac{r u(u-1)}{(\alpha+1)(\alpha+u)}$, and 
\[
R(u,w) = \frac{\zeta^2}{2} \, w^2 + \zeta \rho \, u w - \lambda w + \frac{1}{2} (u^2 - u) \:.
\]
Again, a standard calculation shows that the solution of the generalized Riccati equations \eqref{eq:JumpRiccatiEquations} is
\begin{equation}\label{eq:GRiccatiSolution}
\begin{aligned}
\psi(t,u,w) & = \frac{1}{\zeta} \left(\frac{\lambda}{\zeta} - \rho u \right) - \frac{\gamma}{\zeta^2} \,\frac{\tanh\left(\frac{\gamma}{2} \, t\right) + \eta}{1 + \eta \, \tanh\left(\frac{\gamma}{2} \, t\right)}  \\
\phi(t,u,w) & = \mu\,\frac{\lambda}{\zeta}\left(\frac{\lambda}{\zeta} - \rho u \right) t - 2\mu\frac{\lambda}{\zeta^2}\,\log\!\left(\cosh\left(\frac{\gamma}{2} t\right) + \eta \,\sinh\left(\frac{\gamma}{2} t\right)\right) \!+ t \tilde{\kappa}(u) \:,
\end{aligned}
\end{equation}
where $\gamma = \gamma(u) = \zeta \, \sqrt{\left(\frac{\lambda}{\zeta} - \rho u\right)^2 \!\!+ \frac{1}{4} - \!\left(u-\frac{1}{2}\right)^2}$ and $\eta = \eta(u,w) = \frac{\lambda - \zeta\rho u - \zeta^2 w}{\gamma(u)}$. 
Furthermore, for the Heston model with negative jumps, the function $h$ is given by
\begin{equation}\label{eq:HestonWithJumpsH}
h(u) = \mu\,\frac{\lambda}{\zeta}\left(\frac{\lambda}{\zeta} - \rho u \right) - \mu\,\frac{\lambda}{\zeta^2} \, \gamma(u) + \tilde{\kappa}(u) \:.
\end{equation}

Let us now study the effect of the Esscher transform on the dynamics of the Heston model with jumps.
\begin{proposition}\label{prop:HestonWithJumpsDynamicsPTheta}
Let $\P_\theta$ be the measure given by 
\[
\frac{d\P_\theta}{d\P} = \frac{e^{\theta \, X_{T}}}{\E\left[e^{\theta \, X_{T}}\right]} \:.
\] 
Under $\P_\theta$, the dynamics of the $\P$-Heston process with jumps  $(X_t,V_t)$ becomes
\begin{equation}\label{eq:HestonWithJumpsDynamicsPTheta}
\begin{aligned}
dX_t & = \delta dt + \left(\theta + \zeta \rho \,\psi\left(T-t, \theta, 0 \right) -\frac{1}{2} \right) V_t \, dt + \sqrt{V_t} \, d\tilde{W}_t^1 + dJ_t \:,  && X_0 = 0\\
dV_t & = \tilde{\lambda}_t\, ( \tilde{\mu}_t - V_t) \, dt + \zeta \sqrt{V_t} \, d\tilde{W}_t^2 \:,  && V_0 = V_0 \\
& \hspace*{-.5cm} d\q{\tilde{W}^1,\tilde{W}^2}_t = \rho \, dt \:,
\end{aligned}
\end{equation}
where $\tilde{W}$ is 2-dimensional correlated $\P_\theta$-Brownian motion, $\phi$ and $\psi$ are given in \eqref{eq:GRiccatiSolution},
\[
\tilde{\lambda}_t = \lambda - \zeta \theta \rho - \zeta^2 \,\psi\left(T-t, \theta, 0 \right) \quad \text{and} \quad \tilde{\mu}_t = \frac{\lambda \mu}{\tilde{\lambda}_t} 
\]
and $(J_t)_{t \ge 0}$ is a compound Poisson process with jump rate $\frac{r \alpha}{\alpha+\theta}$ and jump distribution $\text{Neg-Exp}(\alpha+\theta)$ under $\P_\theta$. 
\end{proposition}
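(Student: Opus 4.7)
The plan is to follow the strategy of the proof of Proposition \ref{prop:HestonDynamicsPTheta}, but with two adaptations: the support of $\theta$ is the single point $T$ (so the iterated functions $\Phi,\Psi$ collapse to $\phi,\psi$), and the presence of jumps forces us to apply a Girsanov theorem for semimartingales with jumps, which will produce both a drift change on the continuous part and a change of the jump compensator.

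First I would write the density process $D_t = \left. \frac{d\P_\theta}{d\P}\right|_{\F_t}$ in closed form. By the Markov property and \eqref{eq:LaplaceTransform},
\begin{equation*}
D_t = \frac{\E\bigl[e^{\theta X_T}\mid \F_t\bigr]}{\E\bigl[e^{\theta X_T}\bigr]} = \frac{\exp\bigl(\theta X_t + \psi(T-t,\theta,0) V_t + \phi(T-t,\theta,0)\bigr)}{\exp\bigl(\psi(T,\theta,0) V_0 + \phi(T,\theta,0)\bigr)}.
\end{equation*}
Applying It\^o's formula for jump processes to $\log D_t$ and using the generalized Riccati equations \eqref{eq:JumpRiccatiEquations} to cancel all finite variation terms except the one coming from the jump compensator, one recovers that $D$ is a positive local martingale with stochastic logarithm
\begin{equation*}
\frac{dD_t}{D_{t-}} = \sqrt{V_t}\bigl(\theta\,dW_t^1 + \zeta\,\psi(T-t,\theta,0)\,dW_t^2\bigr) + \bigl(e^{\theta \Delta X_t}-1\bigr) - r\,\E\bigl[e^{\theta J_1} - 1\bigr]\,dt,
\end{equation*}
written more carefully in compensated form using the jump measure of $J$.

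Next I would apply Girsanov's theorem for semimartingales. On the continuous martingale part, the usual drift correction gives that
\begin{equation*}
\tilde W_t^1 = W_t^1 - \int_0^t \sqrt{V_s}\bigl(\theta + \zeta\rho\,\psi(T-s,\theta,0)\bigr)\,ds, \qquad \tilde W_t^2 = W_t^2 - \int_0^t \sqrt{V_s}\bigl(\theta\rho + \zeta\,\psi(T-s,\theta,0)\bigr)\,ds
\end{equation*}
are correlated $\P_\theta$-Brownian motions with $d\langle \tilde W^1,\tilde W^2\rangle_t = \rho\,dt$; substituting into \eqref{eq:HestonWithJumpsDynamicsP} yields exactly the drifts announced in \eqref{eq:HestonWithJumpsDynamicsPTheta} with $\tilde\lambda_t$ and $\tilde\mu_t$ as given. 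For the jump part, the Esscher-type factor $e^{\theta\Delta X_t}$ multiplies the original L\'evy compensator $\nu(dx) = r\alpha e^{\alpha x}\I{x<0}\,dx$ by $e^{\theta x}$, so the new compensator is $\tilde\nu(dx) = r\alpha e^{(\alpha+\theta)x}\I{x<0}\,dx$. This is integrable provided $\alpha + \theta > 0$, and
\begin{equation*}
\int_{-\infty}^0 \tilde\nu(dx) = \frac{r\alpha}{\alpha+\theta}, \qquad \frac{\tilde\nu(dx)}{\tilde\nu(\R)} = (\alpha+\theta)\,e^{(\alpha+\theta)x}\,\I{x<0}\,dx,
\end{equation*}
identifying the $\P_\theta$-law of $J$ as compound Poisson with intensity $r\alpha/(\alpha+\theta)$ and jump distribution $\mathrm{Neg\text{-}Exp}(\alpha+\theta)$, as claimed.

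The main obstacle is making the Girsanov computation for the jump part rigorous: one must check that $D$ is a true martingale (not merely a local one) on $[0,T]$, and justify that the density factor $e^{\theta\Delta X_t}$ produces the change of jump compensator described above. Both follow from the fact that $\E[e^{\theta X_T}]<\infty$ (so $D_T\in L^1$ and $D$ is a true $\P$-martingale by the Markov-property argument), together with the standard formula for the $\P_\theta$-compensator of the jump measure in terms of the predictable density of $D$ with respect to the jumps of $X$, namely $e^{\theta x}$. The rest is algebra, and the $F$ and $R$ contributions produced by the Riccati substitution vanish exactly because $\phi,\psi$ solve \eqref{eq:JumpRiccatiEquations}, which is what makes $D$ a martingale in the first place.
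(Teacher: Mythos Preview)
Your argument is correct. The treatment of the density process and the Girsanov drift change for the Brownian part matches the paper's proof essentially line by line. The genuine difference is in how you identify the $\P_\theta$-law of the jump component. You invoke the semimartingale Girsanov theorem directly: the predictable jump density of $D$ is $e^{\theta x}$, so the $\P$-compensator $r\alpha e^{\alpha x}\mathds{1}_{\{x<0\}}\,dx\,dt$ becomes $r\alpha e^{(\alpha+\theta)x}\mathds{1}_{\{x<0\}}\,dx\,dt$, from which the new intensity and jump law are read off immediately. The paper instead computes the $\P_\theta$-Laplace transform of $J_t$ by hand, factoring $\E\bigl[e^{uJ_t}\E[e^{\theta X_T}\mid\F_t]\bigr]$ via independence of $J$ from $(W^1,W^2)$ and then using the flow identities $\psi(t,\theta,\psi(T-t,\theta,0))=\psi(T,\theta,0)$ and $\phi(T-t,\theta,0)+\phi(t,\theta,\psi(T-t,\theta,0))=\phi(T,\theta,0)$ to collapse everything. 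Your route is shorter and more conceptual but relies on the general Girsanov machinery for random measures; the paper's route is more elementary (only Laplace transforms and the affine flow property) and has the side benefit of making explicit why the Riccati/flow structure forces the jump part to decouple cleanly.
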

\begin{proof}
Denote
\[
D(t,X_t,V_t) = \left.\frac{d\P_\theta}{d\P}\right|_{\F_t} = \frac{e^{\phi\left(T-t, \theta, 0 \right)} }{e^{\phi\left(T, \theta, 0 \right) + \psi\left(T, \theta, 0 \right) \, V_0 }} \: e^{ \psi\left(T-t, \theta, 0 \right) \, V_t + \theta \, X_{t} } \:.
\]
The dynamics of $D(t,X_t,V_t)$ can then be expressed using It\=o's Lemma as
\begin{align*}
dD(t,X_t,V_t) 
& = D(t,X_t,V_t) \left(\theta dX_t + \psi\left(T-t, \theta, 0 \right) dV_t\right) + ...\, dt\\
& = D(t,X_t,V_t) \!\left[\sqrt{V_t}\left(\theta dW_t^1 \!+\! \zeta \, \psi\left(T\!\!-\!t, \theta, 0 \right) dW_t^2\right) \!+\! \theta\,(\delta dt \!+\! dJ_t) \right]
\end{align*}
and Girsanov's theorem then shows that 
\[
d\left(\begin{array}{c}
\tilde{W}_t^1 \\
\tilde{W}_t^2
\end{array}\right)
=
d\left(\begin{array}{c}
W_t^1 \\
W_t^2
\end{array}\right) 
- \sqrt{V_t}
\left(\begin{array}{c}
\theta + \zeta \rho \,\psi\left(T-t, \theta, 0 \right) \\
\theta \rho + \zeta \,\psi\left(T-t, \theta, 0 \right)
\end{array}\right) dt 
\]
is a 2-dimensional Brownian motion under the measure $P_\theta$. Replacing $W$ in eq. \eqref{eq:HestonDynamicsP} by $\tilde{W}$ gives eq. \eqref{eq:HestonWithJumpsDynamicsPTheta}. In order to finish the proof, it remains to show that the jump process $(J_t)_{t \ge 0}$ has the desired distribution under $\P_\theta$. Let us calculate the $\P_\theta$-Laplace transform of $J_t$:
\begin{align*}
\E^{\P_\theta}\left[e^{u J_t}\right] 
& = \frac{\E\left[e^{u J_t}\,\E\left[e^{\theta X_T}\,\middle|\, \F_t\right]\right]}{\E\left[e^{\theta X_T}\right]} \\
& = \frac{e^{\phi\left(T-t, \theta, 0 \right)}}{\E\left[e^{\theta X_T}\right]} \E\left[e^{u J_t+\psi\left(T-t, \theta, 0 \right) \, V_t + \theta X_t}\right] \:.
\end{align*}
By independence of the jumps, 
\[
\E\left[e^{u J_t+\psi\left(T-t, \theta, 0 \right) \, V_t + \theta X_t}\right]
= 
e^{\theta\delta \, t} \, \E\left[e^{(u+\theta) J_t}\right] \, \E\left[e^{\psi\left(T-t, \theta, 0 \right) \, V_t + \theta (X_t - \delta \, t -J_t)  }\right] \:,
\]
where $\E\left[e^{(u+\theta) J_t}\right] = e^{-rt \frac{u+\theta}{u + \theta + \alpha}}$. Furthermore, $(X_t - \delta \, t -J_t,\, V_t)_{t \ge 0}$ is a standard Heston process without jumps. Therefore comparing \eqref{eq:RiccatiSolution} and \eqref{eq:GRiccatiSolution}, we find that 
\[
\E\left[e^{\psi\left(T-t, \theta, 0 \right) \, V_t + \theta (X_t - \delta \, t -J_t)  }\right] 
= e^{\phi(t,\theta,\psi\left(T-t, \theta, 0 \right)) - t \frac{r \theta(\theta-1)}{(\alpha+1)(\alpha+\theta)} + \psi(t,\theta,\psi\left(T-t, \theta, 0 \right)) \, V_0} \:.
\]
Using the fact that $\psi(t,\theta,\psi\left(T-t, \theta, 0 \right)) = \psi\left(T, \theta, 0 \right)$ 
and 
\[
\phi\left(T-t, \theta, 0 \right) + \phi(t,\theta,\psi\left(T-t, \theta, 0 \right)) = \phi\left(T, \theta, 0 \right)
\] 
(see eq. (2.1) in \cite{Kel2011}), we finally obtain
\begin{align*}
\E^{\P_\theta}\left[e^{u J_t}\right] 
& = e^{\theta\delta \, t - rt \frac{u+\theta}{u + \theta + \alpha} - t \frac{r \theta(\theta-1)}{(\alpha+1)(\alpha+\theta)}} \\ 
& = e^{\theta\frac{r}{\alpha+1} \, t - rt \frac{u+\theta}{u + \theta + \alpha} - t \frac{r \theta(\theta-1)}{(\alpha+1)(\alpha+\theta)}} = e^{- \frac{r\alpha}{\alpha+\theta} \, t \, \frac{u}{u + (\alpha + \theta)}}\:,
\end{align*}
which is indeed the Laplace transform of a compound Poisson process with jump rate $\frac{r\alpha}{\alpha+\theta}$ and $\text{Neg-Exp}(\alpha+\theta)$-distributed jumps.
\end{proof}

\subsubsection{Numerical results for the European put option}
Similarly to the case of the Heston model without jumps, denoting $\theta=\theta(\{T\})$, we have 
\begin{equation}\label{eq:HHatHestonWithJumpsEuro}
\begin{aligned}
& \quad \hat{H}(\theta) + \int_0^T h(\theta([t,T])) \,dt \\
& = \: \log\left( \frac{K}{1 - \theta} \right) - \theta \log\left(\frac{-\theta\,K/S_0}{1 - \theta}\right) + T \,\mu\,\frac{\lambda}{\zeta} \left(\frac{\lambda}{\zeta} - \rho \theta  - \frac{\gamma(\theta)}{\zeta} \, \right) + T \, \tilde{\kappa}(\theta)
\end{aligned}
\end{equation}
and we obtain the asymptotically optimal $\theta$ by differentiating \eqref{eq:HHatHestonWithJumpsEuro} with respect to $\theta$ and equating the derivative to 0 by dichotomy . \\
We simulate $N=10000$ trajectories of the Heston model with jumps with parameters $\lambda = 1.1$, $\mu = 0.7$, $\zeta = 0.3$, $\rho = -0.5$, $r=2$, $\alpha=3$ and initial values $V_0 = 1.3$ and $S_0 = 1$, under both $\P$, eq. \eqref{eq:HestonWithJumpsDynamicsP}, and $\P_\theta$, eq. \eqref{eq:HestonWithJumpsDynamicsPTheta}, using a standard Euler scheme with 200 discretization steps. For the $\P$-realisations $X^{(i)}$, we calculate the standard Monte-Carlo estimator of the European put price and for the $\P_\theta$-realisations $X^{(i,\theta)}$, we use \eqref{eq:EstimatorTheta} where $\phi$ and $\psi$ are given in \eqref{eq:GRiccatiSolution} and compute the same statistics as in the previous examples.
In Table \ref{tab:PutVarianceReductionMaturityJumps}, we fix the strike to the value $K=1$ and let the maturity $T$ vary from $0.25$ to $3$, whereas in Tables \ref{tab:PutVarianceReductionStrikeJumpT1} and \ref{tab:PutVarianceReductionStrikeJumpT3}, we fix the maturity to $T=1$ and to $T=3$, while we let the strike $K$ vary between $0.25$ and $1.75$.
\begin{table}[H]
\centering
\begin{tabular}{cccccc}
\hline
$T$ & Price & Std. error & Var. ratio & Adj. ratio & Time, s \\
\hline
\hline
0.25 & 0.0945 & 9.96 $\cdot 10^{-4}$ & 3.28 & 3.00 & 23.6 \\
0.5 & 0.147 & 1.28 $\cdot 10^{-3}$ & 3.20 & 2.99 & 24.5 \\
1 & 0.215 & 1.61 $\cdot 10^{-3}$ & 2.95 & 2.77 & 24.7 \\
2 & 0.309 & 2.04 $\cdot 10^{-3}$ & 2.61 & 2.43 & 24.7 \\
3 & 0.374 & 2.30 $\cdot 10^{-3}$ & 2.40 & 2.20 & 25.0 \\
\hline
\end{tabular}
\caption{The variance ratio as function of the maturity for the European put option in the Heston model with jumps.}
\label{tab:PutVarianceReductionMaturityJumps}
\end{table}
\begin{table}[H]
\centering
\begin{tabular}{cccccc}
\hline
$K$ & Price & Std. error & Var. ratio & Adj. ratio & Time, s \\
\hline
\hline
0.25 & 0.00606 & 7.83 $\cdot 10^{-5}$ & 11.6 & 10.4 & 25.8 \\
0.5 & 0.0377 & 4.03 $\cdot 10^{-4}$ & 5.42 & 5.28 & 24.7 \\
0.75 & 0.105 & 9.44 $\cdot 10^{-4}$ & 3.76 & 3.19 & 27.3 \\
1 & 0.215 & 1.61 $\cdot 10^{-3}$ & 2.93 & 2.89 & 26.1 \\
1.25 & 0.369 & 2.26 $\cdot 10^{-3}$ & 2.65 & 2.46 & 25.4 \\
1.5 & 0.550 & 2.80 $\cdot 10^{-3}$ & 2.43 & 2.24 & 24.9 \\
1.75 & 0.766 & 3.05 $\cdot 10^{-3}$ & 2.57 & 2.44 & 24.6 \\
\hline
\end{tabular}
\caption{The variance ratio as function of the strike for the European put option with maturity $T=1$ in the Heston model with jumps.}
\label{tab:PutVarianceReductionStrikeJumpT1}
\end{table}
\begin{table}[H]
\centering
\begin{tabular}{cccccc}
\hline
$K$ & Price & Std. error & Var. ratio & Adj. ratio & Time, s \\
\hline
\hline
0.25 & 0.0280 & 2.69 $\cdot 10^{-4}$ & 5.19 & 4.99 & 24.8 \\
0.5 & 0.108 & 8.60 $\cdot 10^{-4}$ & 3.32 & 3.05 & 25.1 \\
0.75 & 0.226 & 1.58 $\cdot 10^{-3}$ & 2.68 & 2.56 & 26.3 \\
1 & 0.374 & 2.31 $\cdot 10^{-3}$ & 2.39 & 2.20 & 27.0 \\
1.25 & 0.545 & 3.01 $\cdot 10^{-3}$ & 2.20 & 2.19 & 25.2 \\
1.5 & 0.730 & 3.66 $\cdot 10^{-3}$ & 2.09 & 1.94 & 24.6 \\
1.75 & 0.932 & 4.27 $\cdot 10^{-3}$ & 1.97 & 1.83 & 24.8 \\
\hline
\end{tabular}
\caption{The variance ratio as function of the strike for the European put option with maturity $T=3$ in the Heston model with jumps.}
\label{tab:PutVarianceReductionStrikeJumpT3}
\end{table}
When adding negative jumps to the Heston model, one can see that the variance ratio diminishes. When the options are out of the money however it is still sufficiently important to make it interesting to use in applications. In Figure \ref{fig:VarianceHestonJumps}, we fix the maturity to $T=1.5$ and plot again the empirical variance of the estimator \eqref{eq:EstimatorTheta} as a function of $\theta$ for the Heston model with jumps. The method provides $\theta = -0.312$ as asymptotically optimal measure change which is, as in the continuous case, very close to the optimal one.
\begin{figure}[H]
\centering
\includegraphics[width=10cm,keepaspectratio=true]{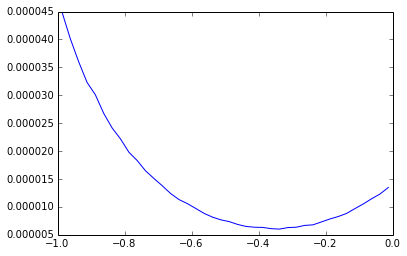}
\caption{The variance of the Monte-Carlo estimator as a function of $\theta$ for the Heston model with jumps.}
\label{fig:VarianceHestonJumps}
\end{figure}

\vspace{\fill}
\bibliographystyle{apalike}
\bibliography{biblio}

\end{document}